\begin{document}

\theoremstyle{definition} 

 \newtheorem{definition}{Definition}[section]
 \newtheorem{remark}[definition]{Remark}
 \newtheorem{example}[definition]{Example}

\newtheorem*{notation}{Notation} 

\theoremstyle{plain} 

 \newtheorem{proposition}[definition]{Proposition}
 \newtheorem{theorem}[definition]{Theorem}
 \newtheorem{corollary}[definition]{Corollary}
 \newtheorem{lemma}[definition]{Lemma}

\def\H{{\mathbb H}}
\def\F{{\mathcal F}}
\def\R{{\mathbb R}}
\def\Q{{\mathbb Q}}
\def\Z{{\mathbb Z}}
\def\E{{\mathcal E}}
\def\N{{\mathbb N}}
\def\X{{\mathcal X}}
\def\Y{{\mathcal Y}}
\def\C{{\mathbb C}}
\def\D{{\mathbb D}}
\def\G{{\mathcal G}}
\def\T{{\mathcal T}}
\def\S{{\mathbb S}}
\def\PSL{{\mathrm{PSL}}}
\def\QMCG{{\mathrm{Mod}^{qc}}}

\title{Quasisymmetric maps, shears, lambda lengths and flips}

\subjclass{}

\keywords{}
\date{}

\author{Hugo Parlier and Dragomir \v Sari\' c}

\address{Department of Mathematics, University of Luxembourg, Luxembourg}
\email{hugo.parlier@uni.lu}
\address{Department of Mathematics, Queens College (CUNY) and the CUNY Graduate Center, New York, NY}
\email{dragomir.saric@qc.cuny.edu}

\maketitle

\begin{abstract}
We study quasisymmetric maps, which act on the boundary of the hyperbolic plane, by looking at their action on the Farey triangulation. Our main results identify exactly which quasisymmetric maps correspond to pinched lambda lengths in terms of shearing coordinates and, separately, in terms of (simultaneous) flip distance of the Farey triangulation and its image by the map. These extend and clarify previous results of Penner and Sullivan.
\end{abstract}

\section{Introduction}

The theories of quasiconformal, quasisymmetric and bi-Lipschitz maps of the hyperbolic plane are related to a plethora of topics in Teichm\"uller theory and deformation spaces of surfaces. In particular, the so-called universal Teichm\"uller space, the space of all quasisymmetric maps of the unit circle that fix $1, i$ and $-i$, contains as embedded subspaces, all other Teichm\"uller spaces. Qualifying which maps correspond to quasisymmetric maps is a natural problem.

A map  of the unit circle can be studied by investigating its action on one of the standard (ideal) triangulations  of the hyperbolic plane such as the classical Farey triangulation. Penner and Sullivan \cite{Penner1} did exactly this, and gave a sufficient condition for a map to be quasisymmetric in terms of the lambda lengths of the image of the Farey triangulation (see Section \ref{sec:prelim} for definitions). Their result says that if a triangulation has pinched lambda lengths then it corresponds to a quasisymmetric map. 

We show a number of extensions and related results. We begin by showing that the Penner-Sullivan condition is not exhaustive, that is that there exists triangulations that are the image of a quasisymmetric map which do not admit any decorations with pinched lambda lengths (Example \ref{ex:shear}). We then give a characterization of the subclass of quasisymmetric maps which do satisfy the Penner-Sullivan condition. This characterization involves shearing coordinates:

\begin{theorem}
\label{thm:mainA}
Let $h:\hat{\R}\to\hat{\R}$ be a quasisymmetric map and let $s:\F\to\R$ be its shear function. Then the triangulation $h(\F )$ admits a decoration with bounded lambda lengths if and only if there exists $M>0$ such that for all fans $\F^p=\{ E_j^p\}_{j=-\infty}^{\infty}$ and for all $n,m\in\Z$ with $n<m$ we have
\begin{equation*}
\label{eq:sum_shears}
\Big{|}\sum_{j=n}^ms(E_j^p)\Big{|}\leq M.
\end{equation*}
\end{theorem}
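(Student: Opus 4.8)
The plan is to phrase everything through Penner's lambda length formula and to isolate a single per-vertex quantity whose logarithmic variation around a fan is exactly a partial sum of shears. Fix a normalization so that the images $h(v)$ of the Farey vertices are genuine points of $\hat{\R}$; a decoration then records at each vertex $v$ one positive number $t_v$, the Euclidean diameter of the horocycle at $h(v)$ in the upper half-plane, and, up to the universal constant, $\lambda(vw)=|h(v)-h(w)|/\sqrt{t_v t_w}$ for every edge $vw$ of $h(\F )$. Here ``bounded lambda lengths'' means $\lambda\in[1/C,C]$ for some $C>0$. For a triangle $T=(v,a,b)$ of $\F$ containing $v$ I set
\[
\tau_v(T):=\frac{|h(v)-h(a)|\,|h(v)-h(b)|}{|h(a)-h(b)|},
\]
the horocycle diameter at $h(v)$ that makes all three lambda lengths of $T$ equal to $1$; a direct computation gives the key identity $\tau_v(T)\,\tau_a(T)=|h(v)-h(a)|^2$.

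First I would reduce the theorem to a purely per-vertex comparability statement. If a decoration has $\lambda\in[1/C,C]$, then for each triangle $T=(v,a,b)\ni v$ one has $t_v^2=\frac{(t_v t_a)(t_v t_b)}{t_a t_b}\asymp\tau_v(T)^2$, so $t_v\asymp\tau_v(T)$ with constants depending only on $C$, and in particular $\tau_v(T)\asymp\tau_v(T')$ for all $T,T'\ni v$. Conversely, suppose that for every $v$ the numbers $\tau_v(T)$ are comparable across all triangles $T\ni v$, uniformly in $v$. Then I simply set $t_v:=\tau_v(T_v)$ for an arbitrary fixed triangle $T_v\ni v$; for any edge $va$, picking a triangle $T\ni va$ gives $t_v t_a\asymp\tau_v(T)\tau_a(T)=|h(v)-h(a)|^2$, whence $\lambda(va)\asymp1$. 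Thus existence of a bounded decoration is equivalent to uniform comparability of $\tau_v(\cdot)$ across each fan.

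The heart of the proof is the identity relating $\tau_v$ to shears, which I would establish by normalizing $h(v)=\infty$. Writing $x_j:=h(v_j)$ for the ordered images of the fan neighbours and $d_j:=x_{j+1}-x_j$ for the gaps, the cross-ratio expression for the shear gives $s(E_j^v)=\log(d_j/d_{j-1})$, so the partial sums telescope:
\[
\sum_{j=n+1}^{m}s(E_j^v)=\log\frac{d_m}{d_n}.
\]
On the other hand $\tau_v(T)/\tau_v(T')$ is M\"obius invariant (under $g$ the quantity $\tau_v(T)$ scales by $|g'(h(v))|$, a factor independent of $T$), so it may be computed in this same normalization, where a short limit calculation on the triangles $T_j=(v,v_j,v_{j+1})$ yields $\tau_v(T_m)/\tau_v(T_n)=d_n/d_m$. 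Comparing the two displays gives $\big|\log(\tau_v(T_m)/\tau_v(T_n))\big|=\big|\sum_{j=n+1}^{m}s(E_j^v)\big|$, which is exactly the family of partial sums appearing in the statement (up to relabeling the index). Hence uniform comparability of $\tau_v$ across the fan at $v$ is literally the hypothesis $\big|\sum_{j=n}^{m}s(E_j^p)\big|\le M$, with the very same $M$. Combined with the reduction above this proves both implications, the forward direction producing an $M$ of size a constant multiple of $\log C$, and the reverse producing lambda lengths in $[e^{-cM},e^{cM}]$.

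The main obstacle is conceptual rather than computational: recognizing that the correct intermediary is the per-vertex unit-triangle diameter $\tau_v(T)$ and that its logarithmic variation around a fan telescopes into a partial sum of shears. Once this is seen both directions are short. The genuine technical points to handle with care are the bookkeeping at the cusp (the horocycle at $\infty$ and the limiting form of $\tau_v$ there), the M\"obius covariance that legitimizes computing the ratio in the convenient normalization $h(v)=\infty$, and---crucially---keeping every constant uniform over the infinitely many vertices $v$, which is precisely what the single constant $M$ in the hypothesis supplies.
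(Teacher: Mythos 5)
Your proposal is correct and is essentially the paper's own proof in different notation: your decoration-independent quantity $\tau_v(T)$ is just the paper's horocyclic arc length repackaged (for a decoration with diameters $t_v$ one has $\alpha_v(T)=t_v/\tau_v(T)$, so comparability of the $\tau_v(T)$ across a fan is the same as boundedness of the horocyclic segments), your choice $t_v=\tau_v(T_v)$ is exactly the paper's decoration with one horocyclic segment of length $1$ at each vertex, and your telescoping identity $\sum_{j=n+1}^{m}s(E_j^v)=\log(d_m/d_n)$ is the paper's developing-map computation expressing segment lengths as exponentials of partial shear sums. The only difference is presentational: you derive everything from the Euclidean formula $\lambda(vw)=|h(v)-h(w)|/\sqrt{t_v t_w}$, whereas the paper invokes Penner's hexagon relation between arc lengths and lambda lengths.
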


It is interesting to compare this result to a previous result of the second author, which characterizes quasisymmetric maps in terms of shearing coordinates (see Theorem \ref{thm:shears-qs} in Section \ref{sec:prelim}). Note that Theorem \ref{thm:shears-qs} and the above result give an alternate proof of the Penner-Sullivan result. 

Flip distances between triangulations are useful and by now classical tools in the study of mapping class groups and Teichm\"uller spaces. A recent focus point has been simultaneous graphs, where any number of flips are allowed on disjoint quadrilaterals. These are particularly well adapted to infinite type surfaces (and surfaces with infinitely many triangles such as the hyperbolic plane) and have been recently studied by the first author and collaborators in different contexts \cite{Disarlo-Parlier, McLeay-Parlier, Fossas-Parlier}. Our next result is about characterizing Penner-Sullivan quasisymmetric maps in terms of (simultaneous) flip distance. 

\begin{theorem}\label{thm:mainB}
A  homeomorphism $h:\hat{\R}\to\hat{\R}$ that preserves $\hat{\mathbb{Q}}$ is of Penner-Sullivan type if and only if the triangulations $\F$ and $h(\F)$ are finite flip distance apart. 
\end{theorem}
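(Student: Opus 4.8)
The plan is to treat both triangulations as having the common vertex set $\hat{\Q}$, so that flips and flip distance are meaningful, and to read everything off Penner's lambda lengths for the canonical Farey decoration. Since $h$ preserves $\hat{\Q}$, both $\F$ and $T:=h(\F)$ are ideal triangulations with vertex set $\hat{\Q}$; I work throughout in the simultaneous setting, where one flip may replace the diagonals of any family of pairwise interior-disjoint quadrilaterals at once. Recall that if each $p/q\in\hat{\Q}$ (in lowest terms) carries its standard horocycle, then the lambda length of the geodesic from $p/q$ to $r/s$ is the determinant $D(p/q,r/s)=|ps-qr|$, the Farey edges are exactly those of determinant $1$, and rescaling the horocycles by parameters $\{t_v\}$ multiplies $\lambda([u,v])$ by $e^{(t_u+t_v)/2}$ (so shears, being alternating ratios of determinants, are decoration-independent).

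For the implication finite flip distance $\Rightarrow$ Penner--Sullivan, I would show that finitely many simultaneous flips from $\F$ keep all determinants uniformly bounded. All determinants of $\F$ equal $1$; if before a round every edge has determinant at most $B$, then Ptolemy's relation $D(\text{new})\,D(\text{old})=D_1D_3+D_2D_4$, evaluated on the disjoint quadrilaterals of that round (whose sides are read off the current triangulation), bounds every new diagonal by $2B^{2}$. Hence after $k$ rounds all determinants are at most some $B_k<\infty$, so the Farey decoration gives $T$ lambda lengths in $[1,B_k]$; these are pinched and $h$ is of Penner--Sullivan type.

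For the converse I would first use Theorem \ref{thm:mainA} to rephrase the Penner--Sullivan hypothesis as uniform boundedness of the fan sums of the shear function. Expressing the shears of $T$ through the Farey determinants and telescoping the fan sum around a vertex $p$ with $T$-fan $\{v_j\}$ gives
\begin{equation*}
\sum_{j=n}^{m}s([p,v_j])=\phi_p(n-1)-\phi_p(m),\qquad \phi_p(k):=\log\frac{D(p,v_k)\,D(p,v_{k+1})}{D(v_k,v_{k+1})},
\end{equation*}
so the hypothesis becomes: each $\phi_p$ has oscillation at most $M$ along its fan, uniformly in $p$. The decisive step is to upgrade this into a uniform bound $D(e)\le N$ for all $e\in T$, thereby ruling out that the freedom in the decoration hides arbitrarily large determinants; I would obtain this by propagating the $\phi_p$-estimates across shared edges and triangles of $T$ (using the resulting comparisons $D(p,v_k)D(p,v_{k+1})\asymp D(v_k,v_{k+1})$) and anchoring the propagation at the behaviour near one cusp. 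Once all edges of $T$ have determinant at most $N$, each edge of $T$ meets only a bounded number of edges of $\F$; a round of simultaneous flips on a maximal family of disjoint innermost crossing quadrilaterals then strictly lowers the maximal crossing number, and since each round may clear unboundedly many crossings at once, $\F$ and $T$ are reached from one another in finitely many rounds.

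I expect the genuine difficulty to be concentrated in this decisive step: showing that uniformly bounded fan sums of shears force uniformly bounded determinants of $T$. The easy direction is a one-line Ptolemy induction, and the concluding uncrossing step is a fairly standard consequence of the simultaneous-flip machinery; it is the incompressibility of the combinatorial complexity of $T$ relative to $\F$ under an optimally chosen decoration that carries the real content of the theorem.
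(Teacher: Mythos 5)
Your easy direction (finite flip distance $\Rightarrow$ Penner--Sullivan) is correct and is essentially the paper's own argument: your Ptolemy induction is exactly Proposition \ref{prop:flip-pinched} plus Corollary \ref{cor:flips-Farey}, run with the Ford-circle (determinant) decoration, which is precisely the paper's standard decoration $H_0$, instead of a decoration with all Farey lambda lengths equal to $2$; Theorem \ref{thm:dec-bounded-lambda-l} then converts pinched lambda lengths into the Penner--Sullivan property. Your telescoping identity $\sum_{j=n}^m s([p,v_j])=\phi_p(n-1)-\phi_p(m)$ is also correct (it is the logarithmic form of the horocyclic-arc relation (\ref{eq:horoc-lambda})).

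The converse, however, has a genuine gap, and it sits exactly where you say it does. Bounded fan sums control only the \emph{oscillation} of each $\phi_p$, leaving an undetermined additive constant at every vertex; your ``decisive step'' is the claim that these infinitely many constants can be simultaneously controlled, i.e.\ that the standard determinant decoration itself is pinched. Note that Penner--Sullivan type is, a priori, only the existence of \emph{some} pinched decoration (that is what Theorem \ref{thm:dec-bounded-lambda-l} gives), so your step amounts to comparing an unknown optimal decoration with the standard one across all of $\hat{\Q}$; it is true, but only as a consequence of the full theorem (Penner--Sullivan $\Rightarrow$ finite flip distance $\Rightarrow$ bounded determinants by your own Ptolemy induction), so you have reduced the statement to an unproven claim of essentially the same strength. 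Your proposed mechanism --- propagating $D(p,v_k)D(p,v_{k+1})\asymp D(v_k,v_{k+1})$ across triangles and anchoring at one cusp --- is not an argument: each comparison holds only up to the unknown per-vertex constants, errors can accumulate along propagation paths, and nothing forces a normalization made at one cusp to stay bounded far away. The paper avoids this issue entirely by arguing geometrically with whatever pinched decoration the hypothesis provides: pinchedness forces every arc of $h(\F)$ to penetrate each horodisk of the decoration by a uniformly bounded depth; retracting all horocycles by this depth plus one makes $h(\F)$ disjoint from the horodisks; in the truncated plane each arc of $h(\F)$ then has uniformly bounded length, while each crossing with an arc of $\F$ costs a definite amount of length, giving a uniform bound on $i(a,\F)$ for $a\in h(\F)$; the Fossas--Parlier theorem (uniformly bounded intersection $\Rightarrow$ finite simultaneous-flip distance) concludes. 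If you want to complete your write-up, the cleanest repair is to replace your decisive step by this geometric argument; the determinant formalism is then not needed for the converse at all.
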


The set $\mathcal{M}(\H )$ of Penner-Sullivan type homeomorphisms of $\hat{\R }$ that preserve $\hat{\Q }$ can be thought of as the universal modular group adapted to the Farey triangulation $\F$. It has the nice property of being transitive on the space of ideal triangulations that are finite flip distance away from $\F$. It is not clear (or obvious) that $\mathcal{M}(\H)$ is a group under composition which is needed if we want to call it the universal modular {\it group}. We establish this fact:

\begin{theorem}
The set $\mathcal{M}(\H)$ is a group under composition. \end{theorem}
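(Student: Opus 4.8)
The plan is to use Theorem \ref{thm:mainB} to replace the (a priori analytic) membership condition in $\mathcal{M}(\H)$ by a purely combinatorial one. Write $d(\T_1,\T_2)$ for the flip distance between two ideal triangulations with vertex set $\hat{\Q}$. By Theorem \ref{thm:mainB}, a homeomorphism $h$ of $\hat{\R}$ preserving $\hat{\Q}$ lies in $\mathcal{M}(\H)$ if and only if $d(\F,h(\F))<\infty$. The identity map clearly satisfies this, since $d(\F,\F)=0$, and composition of maps is associative, so the only things to check are that $\mathcal{M}(\H)$ is closed under composition and under taking inverses. Since the composition of two homeomorphisms preserving $\hat{\Q}$ is again such a homeomorphism, and likewise for inverses, both closure statements reduce to controlling the flip distance $d(\F,\cdot)$ of the resulting triangulation.

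The key step is the following observation: \emph{every homeomorphism $g$ of $\hat{\R}$ that preserves $\hat{\Q}$ acts as an isometry of the flip graph}, i.e. $d(g(\T_1),g(\T_2))=d(\T_1,\T_2)$ for all ideal triangulations $\T_1,\T_2$ with vertex set $\hat{\Q}$. To see this, note first that $g$ sends any such triangulation to an ideal triangulation with the same vertex set $g(\hat{\Q})=\hat{\Q}$: because $g$ respects the circular order on $\hat{\R}$, it carries non-crossing families of geodesics to non-crossing families, so edges map to edges and ideal triangles to ideal triangles. A single flip replaces the diagonal of a quadrilateral by the opposite diagonal; since $g$ maps the four vertices of that quadrilateral to the four vertices of their image quadrilateral, it carries each flip to a flip, and (in the simultaneous setting) disjoint quadrilaterals to disjoint quadrilaterals. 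Applying $g$ to a shortest flip sequence from $\T_1$ to $\T_2$ therefore yields a flip sequence of the same length from $g(\T_1)$ to $g(\T_2)$, giving $d(g(\T_1),g(\T_2))\le d(\T_1,\T_2)$; applying the same inequality to the homeomorphism $g^{-1}$ gives the reverse inequality, hence equality.

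With this lemma in hand the group axioms follow immediately. For composition, let $h,g\in\mathcal{M}(\H)$. Using the triangle inequality for flip distance together with the isometry property of $h$,
\[
d(\F,(h\circ g)(\F))\le d(\F,h(\F))+d(h(\F),h(g(\F)))=d(\F,h(\F))+d(\F,g(\F)),
\]
which is finite, so $h\circ g\in\mathcal{M}(\H)$. For inverses, let $h\in\mathcal{M}(\H)$; applying the isometry property to $h$ gives $d(\F,h^{-1}(\F))=d(h(\F),\F)<\infty$, so $h^{-1}\in\mathcal{M}(\H)$. The main point to verify with care is the isometry lemma — specifically that $g$ genuinely maps ideal triangulations to ideal triangulations and flips to flips — everything else being a formal consequence of the triangle inequality.
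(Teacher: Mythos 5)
Your proof is correct, but it takes a genuinely different route from the paper's. The paper works directly with the defining condition of $\mathcal{M}(\H)$, the finite intersection property; its two ingredients are (i) invariance of that property under homeomorphisms of $\hat{\R}$ preserving $\hat{\Q}$, and (ii) a transitivity lemma for the finite intersection property (Lemma \ref{lem:transitive-fip}), proved by an explicit covering argument (an arc meeting $\T_2$ at most $n$ times lies in at most $n+2$ complementary triangles of $\T_2$, giving the bound $9(n+2)(m+2)$ for intersections with $\T_3$). You instead convert membership into finiteness of flip distance via Theorem \ref{thm:mainB}, so that the analogue of (ii) becomes the triangle inequality for flip distance, which is free, while the analogue of (i) is your isometry lemma. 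Structurally the two arguments are parallel --- ``invariance under homeomorphisms'' plus ``transitivity'' --- but the cost is distributed differently: the paper's Lemma \ref{lem:transitive-fip} is elementary and self-contained, whereas your reduction leans on the full strength of Theorem \ref{thm:mainB}, i.e.\ on the equivalence between finite flip distance and bounded intersection from \cite{Fossas-Parlier}, a much heavier input (though not a circular one, since Theorem \ref{thm:mainB} is established in the paper independently of the group property). One point that both you and the paper treat briskly, and which is exactly the point you flag as needing care: a homeomorphism $g$ preserving $\hat{\Q}$ really does carry allowable triangulations to allowable triangulations. Preservation of crossings, of quadrilaterals and of flips follows from preservation of the cyclic order, as you say; but one must also check local finiteness of the image family. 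This holds because a family of pairwise disjoint geodesics fails to be locally finite exactly when some sequence of distinct edges has endpoint pairs converging to two \emph{distinct} points of $\hat{\R}$, and this condition is preserved by $g$ and $g^{-1}$; with that supplied, your isometry lemma, and hence your proof, is complete.
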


By previously known results, $\mathcal{M}(\H)$ contains as subgroups the mapping class groups of all finite-type punctured surfaces and the punctured solenoid \cite{PennerSaric}, \cite{BonnotPennerSaric}.  Our definition of $\mathcal{M}(\H )$ can be compared with Penner's definition \cite{Penner1} of a universal modular group using the space of all triangulations of $\H$. In recent work of Frenkel and Penner \cite{FrenkelPenner}, the universal modular group is defined to be the homeomorphisms which map the Farey triangulation onto triangulations that agree with $\F$ except for many edges. This corresponds to being finite flip distance away, but where you only allow one edge to be flipped at a time. Our group $\mathcal{M}(\H )$ has rather simple combinatorial and analytic definitions, and it contains the group from \cite{FrenkelPenner} as well as Thompson's group (see for instance \cite{Fossas-Nguyen}). 

These results also open other avenues of investigation. For instance, it would be interesting to characterize quasymmetric maps among homeomorphisms that preserve $\hat{\Q}$ in terms of flips on $\F$.\\

\noindent{\bf Acknowlegements.} The first author was supported by the Luxembourg National Research Fund OPEN grant O19/13865598. The second author was supported by the Simons Foundation grant 638572 and the PSC CUNY grant 63477.

\section{Shears, lambda lengths and preliminary results}\label{sec:prelim}

Our base object is the hyperbolic plane $\H$. Computations will often be in the upper half-plane model whereas figures will be represented in the disk model however. The Farey triangulation $\F$ of $\H$ is constructed starting from the ideal hyperbolic triangle $\Delta_0$ with vertices $0$, $1$ and $\infty$. The group generated by the hyperbolic reflections in the sides of $\Delta_0$ has $\PSL_2(\mathbb{Z})$ as an index $2$ subgroup. The orbit of this reflection group (or of $\PSL_2(\mathbb{Z})$) of $\Delta_0$ is a an ideal triangulation of $\H$, which we will refer to as the Farey triangulation. Its set of edges is denoted by $\mathcal{F}$ and its set of vertices is the extended set of rational numbers $\hat{\mathbb{Q}}=\Q\cup\{\infty\}=\Q\cup\{\frac{1}{0}\}$. Note that two vertices $\frac{a}{b}$ and $\frac{c}{d}$ are related by an edge if and only if $|ad-bc|=1$. The Farey triangulation can be visualized in the disk model of the hyperbolic plane as in Figure \ref{fig:farey}.

\begin{figure}[h!tbp]
	\centering
	\includegraphics[width=6cm]{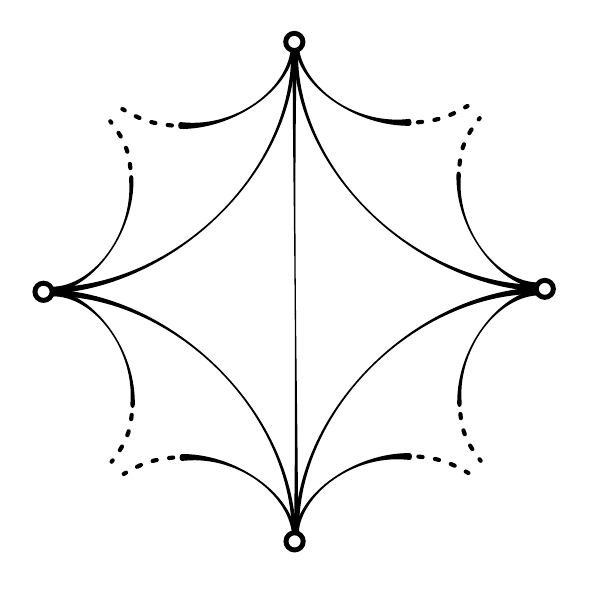}
	\caption{The Farey triangulation}
	\label{fig:farey}
\end{figure}

A horodisk is a disk in $\H$ tangent to a boundary point of $\partial \H$ (so it is of infinite radius), and a horocycle is the boundary of a horodisk. A decoration $H$ of $\mathcal{F}$ is a choice of horocycle tangent to every $q \in \hat{\Q}$. A decoration $H$ is a set, and an element $C\in H$ is a single horocycle. 
The standard horocycle $H_0$ decoration of $\mathcal{F}$ is the decoration that results in the corresponding disks being of maximal density on each triangle of $\H \setminus \mathcal{F}$, meaning that they all look like the ideal triangle in Figure \ref{fig:ideal}.

\begin{figure}[h!tbp]
	\centering
	\includegraphics[width=6cm]{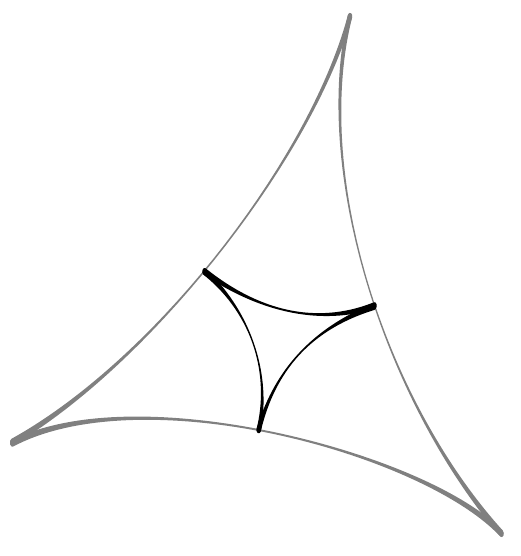}
	\caption{An ideal triangle and tangent horocycles}
	\label{fig:ideal}
\end{figure}

A {\it shear function} is a function $s:\F\to\R$. Geometrically, a shear measures how two ideal triangles are glued together. Take two triangles, $\Delta_1$ and $\Delta_2$, pasted along an edge $E$. We orient $E$ according to the (positive) orientation of $\Delta_1$. Now for each triangle consider the unique circle tangent to all three sides and their tangency points $x_1$ and $x_2$ at $E$ (corresponding to $\Delta_1$ and $\Delta_2$). The shear is the (signed) distance between $x_1$ and $x_2$ following the orientation of $E$. Note that this quantity does not depend on which triangle is chosen to be $\Delta_1$. 

A {\it fan of edges} $\F_p$ with tip $p\in\hat{\Q}$ consists of all edges of $\F$ with $p$ as one of its ideal endpoints. We list $\{ E_j^p\}_{j=-\infty}^{\infty}$ the edges of $\F_p$ such that $E_j^p$ and $E_{j+1}^p$ are adjacent (meaning they share a triangle) and $E_j^p$ comes before $E_{j+1}^p$ for the positive orientation of a horocycle based at $p$.

The shear function $s$ induces a {\it developing map} $h_s:\hat{\Q}\to\hat{\R}$ such that the image triangulation $h_s(\F )$ realizes the shear function. Note that in general $h_s$ does not extend to a homeomorphism of $\hat{\R}$.

Let $k\in\Z$, $n\in\N$ and $\{ E_j^p\}_{j=-\infty}^{\infty}$ be a fan of edges of $\F$ with tip $p$. By pre-composing $h_s$ with an element of $\PSL_2(\Z )$ and post-composing with an element of $\PSL_2(\R )$, we can assume that $h_s$ fixes $-1$, $0$ and $\infty$, that the tip $p$ is at infinity and the edge $E_k^p$ has endpoints $0$ and ${\infty}$. 

The developing map moves an edge $E^p_j$ with endpoints $\infty$ and $j$ to a geodesic with endpoints $\infty$ and $h_s(j)$.
Let $C=\{ z=x+yi:y=1\}$ be a horocycle based at $\infty$. Let $\alpha_j$ be the length of the arc of $C$ between the images by $h_s$ of edges $E_j^p$ and $E_{j+1}^p$.

Then we have
$$
\alpha_j=e^{s(E_k^p)+\cdots +s(E_j^p)}
$$
for $j\geq k$ and
$$
\alpha_j=e^{-s(E_{k-1}^p)-\cdots -s(E_j^p)}
$$
for $j<k-1$ and
$$
\alpha_{k-1}=1.
$$ 

For $k\in Z$ and $n\in\N$, define
$$
s(k,n;p):=\frac{\alpha_k+\alpha_{k+1}+\cdots +\alpha_{k+n-1}}{\alpha_{k-1}+\alpha_{k-2}+\cdots +\alpha_{k-n}}.
$$
The expression $s(k,n;p)$ is the ratio between the sum of the lengths of the first $n$ horocyclic arcs to the right of the geodesic $h_s(E_k^p)$ and the sum of the lengths of the first $n$ horocyclic arcs to the left of $h_s(E_k^p)$. While the lengths of horocyclic arcs $\alpha_j$ depend on the choice of the horocycle, the quotient $s(k,n;p)$ does not.

An orientation preserving homeomorphism $h:\hat{\R}\to\hat{\R}$ is quasisymmetric if there exists $M>0$ such that for any two adjacent arcs I and J with $|I|=|J|$ we have $1/M\leq |h(I)|/|h(J)|\leq M$, where $|I|$ is the length of $I$. The ratios $s(k,n;p)$ enable a precise characterization of quasisymmetric maps via shears:

\begin{theorem}[See \cite{Saric1}, \cite{Saric2}]
\label{thm:shears-qs}
A shear function $$s:\F\to\R$$ is induced by a quasisymmetric map if and only if there exists $M\geq 1$ such that for all fans $\{ E_j^p\}$ of $\F$ and all $k\in\Z$, $n\in\N$
$$
\frac{1}{M}\leq s(k,n;p)\leq M.
$$
\end{theorem}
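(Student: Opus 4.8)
The plan is to prove both implications by converting the combinatorial quantity $s(k,n;p)$ into a genuine boundary distortion ratio. First I would exploit the normalization already set up in the preamble: for a fixed fan $\F^p$ and fixed $k,n$, pre-compose with the element of $\PSL_2(\Z)$ sending the tip to $\infty$ and the relevant endpoint of $E_k^p$ to $0$, and post-compose with an element of $\PSL_2(\R)$, so that the normalized developing map $\tilde h$ fixes $-1,0,\infty$ and $\tilde h(\infty)=\infty$. With the horocycle $C=\{y=1\}$ the arc lengths become $\alpha_j=\tilde h(j+1-k)-\tilde h(j-k)$, so the two sums in the definition of $s(k,n;p)$ telescope and
\[
s(k,n;p)=\frac{\tilde h(n)-\tilde h(0)}{\tilde h(0)-\tilde h(-n)}=\frac{|\tilde h([0,n])|}{|\tilde h([-n,0])|}.
\]
Thus $s(k,n;p)$ is exactly the ratio of the images of the two adjacent equal-length intervals $[0,n]$ and $[-n,0]$; equivalently, since $\tilde h$ fixes $\infty$, it is the cross-ratio of the images $h(q_1),h(q_2),h(q_3),h(p)$ of the four Farey vertices attached to $(k,n,p)$, whose own cross-ratio equals that of $(-n,0,n,\infty)$.

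For the forward implication I would observe that every such source quadruple has the \emph{same} cross-ratio, namely that of $(-n,0,n,\infty)$, independent of $p$, $k$ and $n$. A quasisymmetric homeomorphism of the circle is quasi-M\"obius: it distorts cross-ratios by an amount governed by a single modulus depending only on the source cross-ratio. Applying this to quadruples of the fixed cross-ratio above gives a uniform bound $s(k,n;p)\le M$, and by the symmetric statement $s(k,n;p)\ge 1/M$, simultaneously over all fans and all $k,n$. The fixity of the source cross-ratio is precisely what yields uniformity and sidesteps the fact that the normalizing M\"obius maps alter the naive quasisymmetry constant.

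For the converse I would first read off from the case $n=1$ that $s(k,1;p)=e^{s(E_k^p)}$, so the hypothesis forces $|s(E)|\le\log M$ on every edge; bounded shears together with the two-sided ratio bound force, in the normalized picture, $\tilde h(n)\to+\infty$ and $\tilde h(-n)\to-\infty$, so the images of each fan exhaust the two complementary arcs and nest correctly, giving an extension of $h_s$ to an orientation-preserving homeomorphism $h$ of $\hat\R$. It then remains to prove quasisymmetry, and here I would reduce to the Beurling--Ahlfors condition, a uniform bound on $\rho(x,t)=\frac{h(x+t)-h(x)}{h(x)-h(x-t)}$. Every real symmetric triple, completed by $\infty$, has the same cross-ratio as the fan quadruples, and the fan quadruples form a dense family among these as the tip ranges over $\hat\Q$ and $k,n$ vary; one transfers the discrete bound to all $(x,t)$ by approximating an arbitrary symmetric triple along the Farey (Stern--Brocot) tree and controlling $\rho(x,t)$ by a bounded product of factors $s(k,n;p)\in[1/M,M]$.

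I expect the converse to be the main obstacle, and within it the delicate point is this last step: upgrading the \emph{discrete} fan bound to the \emph{continuum} Beurling--Ahlfors bound uniformly, since an arbitrary symmetric triple is not itself a fan configuration and the Farey approximation must be arranged so that only boundedly many ratio factors accumulate, with multiplicative constants independent of $x$ and $t$. A cleaner alternative for this step is to build a quasiconformal extension of $h$ directly from the shear data, mapping each triangle of $\F$ to the corresponding triangle of $h(\F)$ by the natural hyperbolic interpolation, and to estimate its complex dilatation in terms of the bounded ratios $s(k,n;p)$; a uniform dilatation bound then yields quasiconformality and hence quasisymmetry of the boundary map.
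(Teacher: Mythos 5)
The paper does not actually prove this theorem; it quotes it from \cite{Saric1} and \cite{Saric2}, so your attempt has to be measured against those proofs rather than against anything in the text. Your forward direction is correct and is essentially the standard argument: the telescoping identity $s(k,n;p)=\frac{\tilde h(n)-\tilde h(0)}{\tilde h(0)-\tilde h(-n)}$ does hold in the normalized picture, so $s(k,n;p)$ is a cross-ratio of the $h$-images of four Farey vertices whose source cross-ratio is that of $(-n,0,n,\infty)$, independent of $p,k,n$; quasisymmetric circle maps are quasi-M\"obius (V\"ais\"al\"a), which gives the uniform two-sided bound. Your observation $s(k,1;p)=e^{s(E_k^p)}$ is also right, and the divergence of fan sums really does follow from the ratio bounds: if $\sum_{j\ge 0}\alpha_j<\infty$, letting $n\to\infty$ in $s(k,n;p)\ge 1/M$ gives $\sum_{j<k}\alpha_j\le M\sum_{j\ge k}\alpha_j$, whose right side tends to $0$ as $k\to+\infty$ while the left side is positive and increasing in $k$, a contradiction.

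The converse, however, has a genuine gap exactly where you flag it, and neither mechanism you propose can close it. First, the density claim is false: if $p=a/b$ is the tip of a fan, every Farey neighbor $c/d$ of $p$ satisfies $|a/b-c/d|=1/(bd)\le 1/b$, so all three non-tip vertices of a fan quadruple lie within $1/b$ of the tip. Hence any sequence of fan quadruples whose tips tend to $\infty$ has all four points tending to $\infty$, and the only quadruples of the form $(x-t,x,x+t,\infty)$ in the closure of the family of fan quadruples are the integer ones (tip exactly $\infty$); continuity of $h$ therefore transfers nothing. Second, the ``bounded product of factors'' step is unsubstantiated: connecting a general symmetric triple to fan configurations through the Farey tessellation takes a number of steps governed by the continued-fraction expansions of $x$ and $x\pm t$, which is unbounded, and naively multiplying per-step distortions lying in $[1/M,M]$ yields constants of the form $M^{\#\mathrm{steps}}$. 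Arranging the estimate so that the distortion telescopes instead of accumulating is precisely the hard content of \cite{Saric1} and \cite{Saric2}, where it is done by hyperbolic-geometric comparison of $h(\F)$ with $\F$, not by approximation; your sketch names this goal without supplying the mechanism. Your quasiconformal-extension fallback has the same problem in disguise: any triangle-by-triangle interpolation whose dilatation is controlled by local shear data would prove that bounded shears imply quasisymmetry, which is false (constant shears $\epsilon>0$ are bounded, yet one side of each fan sum converges geometrically, so the developing map is not even a homeomorphism); the required dilatation estimate must therefore be non-local and use the ratios $s(k,n;p)$ at all scales, and no such estimate is indicated. Finally, the fact that fan-wise divergence makes the developing map extend to a homeomorphism of $\hat{\R}$ is itself a theorem of \cite{Saric1} rather than a formality, though this is the lesser of the gaps.
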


We now define lambda lengths, which require decorated triangulations. A {\it decorated triangulation} $\tilde{\mathcal{T}}$ of $\H$ is an ideal (locally finite) triangulation $\mathcal{T}$ together with a choice of a horocycle at each vertex. There is a homeomorphism $f$ of $\hat{\R}$ which moves the (vertices of the) Farey triangulation $\F$ onto (those of) $\T$ and the homeomorphism is unique once we decide where a fixed triangle of $\F$ is mapped to. 

A decorated triangulation $\tilde{\mathcal{T}}$ assigns {\it lambda lengths} to the edges of $\F$ under the corresponding homeomorphism as follows (see \cite{Penner}). The lambda length of $E\in\F$ is the quantity
$$
\lambda (E)=\sqrt{e^{\delta (E)}}
$$
where $\delta (E)$ is the signed hyperbolic distance between the intersection of the edge $f(E)\in\mathcal{T}$ with the two horocycles at its endpoints from the decorated triangulation $\tilde{\mathcal{T}}$ (see Figure \ref{fig:lambda}).

\begin{figure}[h!tbp]
\centering
\begin{minipage}[b]{0.3\textwidth} 
	\includegraphics[width=\textwidth]{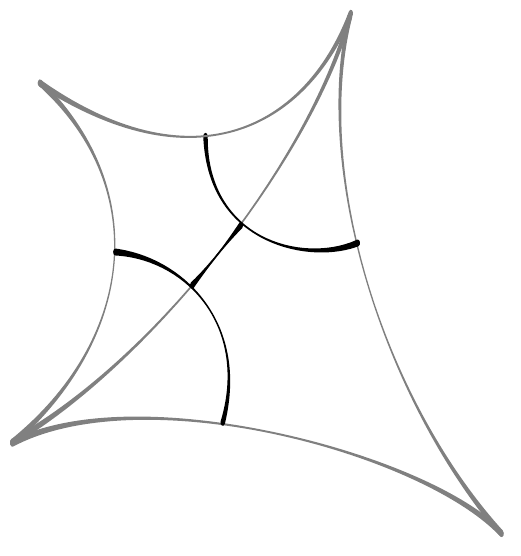}
    	\end{minipage}
~ 
\qquad\qquad
\begin{minipage}[b]{0.3\textwidth} 
	\includegraphics[width=\textwidth]{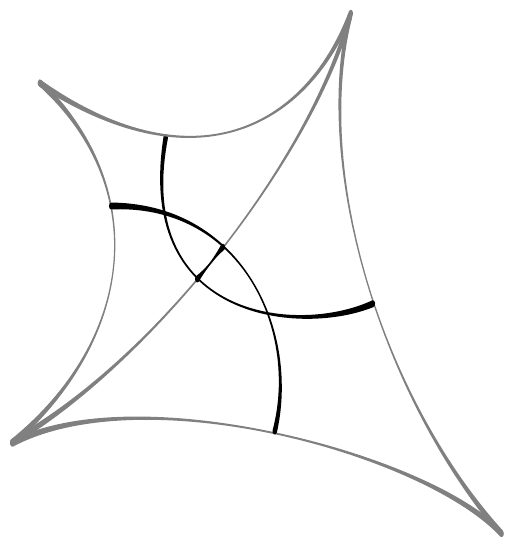}
\end{minipage}
\caption{Positive and negative $\delta(E)$}
\label{fig:lambda}
\end{figure}

Conversely, given an assignment $\lambda :\F\to\R^{+}$ there exists a decorated (possibly degenerate) triangulation $\tilde{\mathcal{T}}$ that realizes $\lambda$ (see \cite{Penner1}, \cite{Penner}). The triangulation may not cover the whole hyperbolic plane and in this case we call it degenerate. Note that this amounts to the developing map not extending to a homeomorphism of $\hat{\R}$. We refer to \cite{Saric1} for a necessary and sufficient condition on the shears or lambda lengths such that the developing map is a homeomorphism. 

The horocycles and edges of $\tilde{\mathcal{T}}$ divide the hyperbolic plane into hexagons and triangles. The hexagons have three sides on the edges and three sides on the horocycles and the triangles have one side on the horocycles and two sides on the edges. There are relations between the horocyclic lengths and the lambda lengths of the sides as follows.

Assume that we have an ideal hyperbolic triangle with vertices $a$, $b$ and $c$ and a fixed choice of corresponding horocycles $C_a,C_b,C_c$ at the vertices. Let $\lambda_{a,b}$ denote the lambda length of the geodesic with endpoints $a$ and $b$. Let $\alpha_a$ be the length of the arc of the horocycle $C_a$ inside the triangle. Then \cite[Lemma 4.9, Page 36]{Penner}
\begin{equation}
\label{eq:horoc-lambda}
\alpha_a=\frac{\lambda_{b,c}}{\lambda_{a,b}\lambda_{a,c}}.
\end{equation}

Assume that we have an ideal hyperbolic quadrilateral with vertices $a$, $b$, $c$ and $d$ in the given order. The Ptolemy relation \cite[Corollary 4.16, Page 41]{Penner} holds
\begin{equation}
\label{eq:ptolemy}
\lambda_{a,c}\lambda_{b,d}=\lambda_{a,b}\lambda_{c,d}+\lambda_{b,c}\lambda_{d,a}.
\end{equation}

\begin{definition}
A lambda length function
$$
\lambda :\F\to\R^{+}
$$
is {\it pinched} if there exists $M>1$ such that
$$
\frac{1}{M}\leq\lambda (E)\leq M
$$
for all $E\in\F$.
\end{definition}

\begin{remark}
We note that having pinched lambda lengths is a global condition which depends on the decoration across all vertices and it is straightforward to find decorations for the identity map that is not pinched. In contrast, the condition in Theorem \ref{thm:shears-qs} is localized to fans and independent of a choice of decoration.
\end{remark}

The following result relates pinched lambda lengths to quasisymmetric maps.
\begin{theorem}[Penner-Sullivan \cite{Penner1}]
\label{thm:lambda-pinched}
Let $\lambda :\F\to\R^{+}$ be pinched. Then the characteristic map is quasisymmetric.
\end{theorem}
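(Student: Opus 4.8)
The plan is to read off quasisymmetry from the shear characterization of Theorem \ref{thm:shears-qs}: I will show that pinched lambda lengths force a uniform two-sided bound on every ratio $s(k,n;p)$. The bridge between the two worlds is the horocyclic--lambda relation \eqref{eq:horoc-lambda}, which lets me rewrite each horocyclic arc length $\alpha_j$ appearing in the definition of $s(k,n;p)$ purely in terms of lambda lengths.

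Concretely, I would fix a fan $\{E_j^p\}$ with tip $p$ and use the decoration of the triangulation $\tilde{\T}$ that realizes $\lambda$, measuring all arcs at $p$ against the single decoration horocycle $C_p$. Writing $q_j$ for the endpoint of $E_j^p$ distinct from $p$, the two adjacent edges $E_j^p, E_{j+1}^p$ bound the ideal triangle with vertices $p, q_j, q_{j+1}$, so \eqref{eq:horoc-lambda} gives
\[
\alpha_j=\frac{\lambda_{q_j,q_{j+1}}}{\lambda_{p,q_j}\,\lambda_{p,q_{j+1}}}.
\]
Since, as noted after its definition, the ratio $s(k,n;p)$ does not depend on which horocycle at $p$ is used, these $\alpha_j$ are admissible for computing it.

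Inserting the pinching inequalities $1/M\le\lambda(E)\le M$ into this formula bounds the numerator between $1/M$ and $M$ and the denominator between $1/M^2$ and $M^2$, whence $M^{-3}\le\alpha_j\le M^3$ for every $j$. As the numerator and denominator of $s(k,n;p)$ are each sums of exactly $n$ of these arcs, both lie in $[nM^{-3},nM^3]$, and therefore
\[
M^{-6}\le s(k,n;p)\le M^6
\]
uniformly over all fans, all $k\in\Z$ and all $n\in\N$. Theorem \ref{thm:shears-qs} applied with constant $M^6$ then shows that the shear function of the realized triangulation is induced by a quasisymmetric map, namely the characteristic map itself, which is the desired conclusion (and incidentally shows that the developing map extends to a genuine homeomorphism, so the triangulation is non-degenerate).

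I expect the only real subtlety, rather than the elementary estimate above, to be bookkeeping: one must check that all the arcs $\alpha_j$ are computed against a common horocycle at $p$ so that the quotient produced by \eqref{eq:horoc-lambda} genuinely equals the horocycle-independent quantity $s(k,n;p)$ of Theorem \ref{thm:shears-qs}, and that the combinatorics of the fan correctly identifies the triangle between $E_j^p$ and $E_{j+1}^p$ as the one with vertices $p, q_j, q_{j+1}$. Once this dictionary is in place the result is immediate.
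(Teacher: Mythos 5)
Your proposal is correct and follows essentially the same route as the paper: both use the horocyclic--lambda relation \eqref{eq:horoc-lambda} to deduce $M^{-3}\leq\alpha_j\leq M^3$ from the pinching, conclude $M^{-6}\leq s(k,n;p)\leq M^6$, and then invoke Theorem \ref{thm:shears-qs}. Your version simply spells out the bookkeeping (the explicit formula for $\alpha_j$ in the fan and the horocycle-independence of $s(k,n;p)$) that the paper leaves implicit.
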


\begin{proof}
We show how to deduce this result from Theorem \ref{thm:shears-qs}. Since $\frac{1}{M}\leq \lambda (E)\leq M$ for all $E\in\F$ and by (\ref{eq:horoc-lambda}) we have $\frac{1}{M^3}\leq \alpha (E)\leq M^3$, where $\alpha (E)$ is the length of the horocyclic arc of the decoration with one endpoint on $E$ and to the immediate right of $E$. Then $\frac{1}{M^6}\leq s(k,n;p)\leq M^6$ and $f_\lambda$ is quasisymmetric by Theorem \ref{thm:shears-qs}.
\end{proof}

The following example will show that the condition in the above theorem cannot be an if and only if condition. 

\begin{example}
\label{ex:shear}
We construct a shear function $s:\F\to\R$ corresponding to a quasisymmetric map, but for which any decoration is unpinched.

Let $s:\F\to\R$ be zero on all edges of the Farey triangulation $\F$ that do not belong to the fan $\F^{\infty}$. 
Fix $k\in\N$. Let $E_j$ be the edge with endpoints $j$ and $\infty$. Define 
$$s(E_{-16^j})=\log 2$$ 
and
$$s(E_{16^j})=-\log 2$$
 for $j\in\N\cup\{ 0\}$. We set $s$ to be zero on all other edges of $\F^{\infty}$.
 Let $h_s:\hat{\R}\to\hat{\R}$ be the corresponding homeomorphism that is normalized to be the identity on $[-1,1]$. The map $h_s$ multiples the distances on $[1,16]$ by $\frac{1}{2}$ and the distances on $[16^k,16^{k+1}]$ by $\frac{1}{2^{k+1}}$. 
 
To prove that $h_s$ is quasisymmetric we need to bound the quotient
$$
\frac{h_s(x+t)-h_s(x)}{h_s(x)-h_s(x-t)}.
$$
We assume that $x+t>0$ and $x>0$. The case when $x-t<0$ and $x<0$ can be proved analogously using the symmetry of the shears.

\vskip .2 cm

{\bf Case 1.} Let $16^k\leq x+t<16^{k+1}$, $16^{l-1}\leq x<16^l$ and $l\leq k-1$. Said otherwise, the interval $[x,x+t]$ contains at least one interval of the form $[16^{k-1},16^k]$. Then we have
$$
 -16^{k+1}+2\cdot 16^{l-1}<x-t<-16^k +2\cdot 16^l
$$ 

Note that $x-t$ is negative. By $l\leq k-1$ we conclude
$$
-16^{k+1}\leq x-t\leq -16^{k-1}.
$$

A direct computation gives
$$
h_s(16^k)=\frac{15}{14}8^k-\frac{1}{14}
$$
and by symmetry
$$
h_s(-16^k)=-\frac{15}{14}8^k+\frac{1}{14}.
$$

The above inequalities imply
$$
\frac{15}{14}8^k-\frac{1}{14}\leq h_s(x+t) <\frac{15}{14}8^{k+1}-\frac{1}{14},
$$
$$
\frac{15}{14}8^{l-1}-\frac{1}{14}\leq h_s(x) <\frac{15}{14}8^{l}-\frac{1}{14},
$$
and
$$
-\frac{15}{14}8^{k+1}+\frac{1}{14}\leq h_s(x-t) <-\frac{15}{14}8^{k-1}+\frac{1}{14}.
$$
The above three inequalities imply
$$
\frac{7}{2\cdot 8^2}\leq\frac{h_s(x+t)-h_s(x)}{h_s(x)-h_s(x-t)}\leq 8^2.
$$

\vskip .2 cm

{\bf Case 2.} Assume $x\in [0,1)$. This is similar to Case 1.

\vskip .2 cm

{\bf Case 3.} Assume that $16^{k-1}\leq x\leq 16^k\leq x+t\leq 16^{k+1}$. There are two subcases to consider: either $x-t\geq 16^{k-2}$ or $x-t<16^{k-2}$.

If $x-t\geq 16^{k-2}$ then we have 
$$
\frac{1}{4}\leq\frac{h_s(x+t)-h_s(x)}{h_s(x)-h_s(x-t)}\leq 4
$$
because the interval $[x-t,x+t]$ contains endpoints of at most two geodesics with shears $-\log 2$. 

Now assume that $x-t<16^{k-2}$. Then the interval $[x-t,x]$ contains $[16^{k-2},16^{k-1}]$. This implies
$$
\frac{h_s(x+t)-h_s(x)}{h_s(x)-h_s(x-t)}\leq \frac{\frac{15}{14}(8^{k+1}-8^{k-1})}{\frac{15}{14}(8^{k-1}-8^{k-2})}\leq 8^3.
$$
To find a lower bound, note that $h_s(x+t)-h_s(x)\geq \frac{1}{2^k}t$. We need an upper bound on $h_s(x)-h_s(x-t)$. For $16^l\leq x-t\leq 16^{l+1}<16^{k-1}$ we have
$$
16^{k-1}-16^{l+1}\leq t\leq 16^k-16^l
$$
which implies $\frac{15}{16} 16^{k-1}\leq t \leq 16 \cdot 16^{k-1}$ and
$$
h_s(x)-h_s(x-t)\leq \frac{15}{14}(8^k-8^l)\leq \frac{16^2}{14} \frac{1}{2^k} t.
$$

This gives a lower bound for $\frac{h_s(x+t)-h_s(x)}{h_s(x)-h_s(x-t)}$ when $x-t\geq 1$. 

For $x-t\leq 1$ a similar estimate gives the desired bound.
We have established that $h_s$ is a quasisymmetric map. We now need to show that it does not admit a pinched lambda length decoration.
\end{example}

\begin{proposition}
\label{prop:not-pinched}
Let $s$ be the shear function constructed above (Example \ref{ex:shear}). Then no decoration on $h_s(\F )$ has pinched lambda lengths.
\end{proposition}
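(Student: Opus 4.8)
The plan is to assume, for contradiction, that $h_s(\F)$ admits a decoration with lambda lengths pinched by some $M>1$, and to extract a contradiction purely from the fan $\F^{\infty}$. First I would record that $h_s$ fixes $\infty$: it is the identity on $[-1,1]$, preserves the cyclic order of $\hat{\R}$, and satisfies $h_s(16^k)\to\infty$, so the fan $h_s(\F^{\infty})$ consists of the geodesics joining $\infty$ to the points $h_s(j)$, $j\in\Z$. The decoration assigns a single horocycle $C_\infty$ at the vertex $\infty$, which in the upper half-plane is a horizontal line $\{y=h\}$ for one fixed $h>0$.

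Next I would compute the length $\beta_j$ of the arc that $C_\infty$ cuts off inside the triangle $T_j$ of $h_s(\F^{\infty})$ with vertices $\infty$, $h_s(j)$ and $h_s(j+1)$. For $j$ large this arc is the full horizontal segment at height $h$ between $x=h_s(j)$ and $x=h_s(j+1)$, so
$$\beta_j=\frac{h_s(j+1)-h_s(j)}{h}.$$
The crucial point is that $\beta_j\to 0$ as $j\to+\infty$. This is immediate from Example \ref{ex:shear}: since $h_s$ multiplies lengths on $[16^k,16^{k+1}]$ by $\tfrac{1}{2^{k+1}}$, every integer step $j\mapsto j+1$ with $16^k\le j<16^{k+1}$ satisfies $h_s(j+1)-h_s(j)=\tfrac{1}{2^{k+1}}$, equivalently $\beta_j=2^{-(k+1)}/h$. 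Geometrically, the single horocycle of fixed height $h$ cuts off shorter and shorter arcs as one moves outward along the fan.

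Finally I would invoke the horocycle--lambda length identity (\ref{eq:horoc-lambda}) for the triangle $T_j$, with $a=\infty$, $b=h_s(j)$, $c=h_s(j+1)$, which reads
$$\beta_j=\frac{\lambda_{h_s(j),h_s(j+1)}}{\lambda_{\infty,h_s(j)}\,\lambda_{\infty,h_s(j+1)}}.$$
A pinched decoration forces the numerator to be at least $1/M$ and each denominator factor at most $M$, hence $\beta_j\ge 1/M^3$ for every $j$; this contradicts $\beta_j\to 0$. As the decoration was arbitrary, no decoration of $h_s(\F)$ is pinched.

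The step needing the most care is making $\beta_j\to 0$ airtight, namely observing that the decoration contributes only the one fixed height $h$ at $\infty$, so the arc lengths are governed by the combinatorial quantities $h_s(j+1)-h_s(j)=e^{s(E_0)+\cdots+s(E_j)}$ read off directly from the shears, independently of how the horocycles at the finite vertices $h_s(j)$ are chosen. Once this asymptotic is in hand, the contradiction with (\ref{eq:horoc-lambda}) is a single line.
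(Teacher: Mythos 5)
Your proof is correct and takes essentially the same route as the paper's: both arguments fix the horocycle at $\infty$, exploit the fact that consecutive image points $h_s(j)$, $h_s(j+1)$ collapse at rate $2^{-(k+1)}$ on $[16^k,16^{k+1}]$, and derive a contradiction from the lambda-length relations in the triangles of the fan at $\infty$. The only cosmetic difference is in the finishing step: you invoke the horocyclic-arc identity (\ref{eq:horoc-lambda}) to force $\beta_j\geq 1/M^3$ against $\beta_j\to 0$, while the paper argues that pinched lambda lengths on the vertical edges force pinched Euclidean radii for the horocycles at $h_s(16^k)$ and $h_s(16^k+1)$, whence the lambda length of the edge joining them tends to zero --- the same computation packaged differently.
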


\begin{proof}
We choose a horocycle based at $\infty$ to have Euclidean height $1$ and show that any choice of horocycles at other vertices cannot have pinched lambda lengths. (Having pinched lambda lengths is invariant under scaling.) Indeed, the image under $h_s$ of the endpoints of $E_{16^k}$ and $E_{16^k+1}$ are at distance $\frac{1}{2^{k+1}}$. If we want to have pinched lambda lengths on $E_{16^k}$ and $E_{16^k+1}$ we are forced to choose horocycles at the vertices $h_s({16^k})$ and $h_s({16^k+1})$ to be circles with pinched Euclidean radii. This implies that the lambda length on the geodesic with endpoints $h_s({16^k})$ and $h_s({16^k+1})$ is going to zero as $k\to\infty$. Therefore no choice of decorations gives pinched lambda lengths.
\end{proof}

\section{Shears and quasisymmetric maps}

The previous example shows that quasisymmetric maps need not be induced by triangulations with bounded lambda lengths. The quasisymmetric maps with this special property, in light of Theorem \ref{thm:lambda-pinched}, will be said to be of {\it Penner-Sullivan} type. This raises the question of which quasisymmetric maps come from bounded lambda lengths, and in particular whether there is a characterization of this special subclass of quasisymmetric maps as in Theorem \ref{thm:shears-qs}. The following result answers this positively (Theorem \ref{thm:mainA} from the introduction):

\begin{theorem}
\label{thm:dec-bounded-lambda-l}
Let $h:\hat{\R}\to\hat{\R}$ be an (orientation preserving) homeomorphism and let $s:\F\to\R$ be its shear function. Then the triangulation $h(\F )$ admits a decoration with bounded lambda lengths if and only if there exists $M>0$ such that for all fans $\F^p=\{ E_j^p\}_{j=-\infty}^{\infty}$ and for all $n,m\in\Z$ with $n<m$ we have
\begin{equation}
\label{eq:sum_shears}
\Big{|}\sum_{j=n}^ms(E^p_j)\Big{|}\leq M.
\end{equation}
\end{theorem}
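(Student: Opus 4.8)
The plan is to recast both sides of the equivalence as a single statement about the horocyclic arc lengths in each fan, using the horocyclic-length identity (\ref{eq:horoc-lambda}) as the bridge in both directions. The first step is to record the dictionary between shear sums and arc lengths. From the developing-map formulas for the $\alpha_j$ one reads off $\alpha_j/\alpha_{j-1}=e^{s(E_j^p)}$ in any fan $\F^p$, so that for $n<m$
$$\sum_{j=n}^m s(E_j^p)=\log\frac{\alpha_m}{\alpha_{n-1}}.$$
Hence (\ref{eq:sum_shears}) holds with constant $M$ if and only if, in every fan, any two arc lengths satisfy $e^{-M}\le\alpha_i/\alpha_j\le e^M$; that is, within each fan the horocyclic arc lengths are mutually comparable, with a constant uniform over all fans. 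This comparison is scale invariant, so it is independent of the chosen horocycle at the tip $p$.

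For the forward direction, suppose $h(\F)$ carries a decoration with $1/C\le\lambda(E)\le C$. Fixing a fan $\F^p$ and applying (\ref{eq:horoc-lambda}) in each of its triangles to the horocycle at $p$, the arc $\alpha_j$ equals $\lambda_{\mathrm{cross}}/(\lambda_{\mathrm{spoke}}\lambda_{\mathrm{spoke}})$, whence $\alpha_j\in[C^{-3},C^3]$. All arc ratios then lie in $[C^{-6},C^6]$, and by the dictionary the estimate (\ref{eq:sum_shears}) holds with $M=6\log C$. This direction is routine once the arc formulas and (\ref{eq:horoc-lambda}) are in hand.

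The heart of the argument is the reverse direction, and here the plan is to build the decoration vertex by vertex in a way that decouples the choices completely. Assuming (\ref{eq:sum_shears}), for each vertex $v$ the arc lengths of any fixed horocycle at $v$ along its fan are mutually comparable, hence bounded above; I would therefore scale the horocycle $C_v$ so that the supremum of its fan arc lengths equals $1$, which forces every arc of $C_v$ into $[e^{-M},1]$. The key identity comes from multiplying the two instances of (\ref{eq:horoc-lambda}) in a single triangle $\{u,v,w\}$: if $\alpha_u,\alpha_v$ denote the arcs of $C_u,C_v$ inside that triangle, then $\alpha_u\alpha_v=1/\lambda_{uv}^2$, so for every edge $E=\{u,v\}$, choosing a common triangle on $E$,
$$\lambda(E)=\frac{1}{\sqrt{\alpha_u\alpha_v}}\in[1,e^{M}].$$
This yields a decoration of $h(\F)$ with bounded lambda lengths and completes the equivalence.

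The individual computations are short, so the difficulty is conceptual: locating the normalization that makes the construction decouple. The point I expect to require the most care is precisely that each $C_v$ is selected using only the fan at $v$, with no compatibility imposed between neighboring vertices, and that the product identity $\alpha_u\alpha_v=\lambda_{uv}^{-2}$ then bounds every edge regardless. I would also verify carefully that the sup-normalization is legitimate, namely that for a fixed horocycle the fan arc lengths are bounded above, which is exactly the mutual comparability supplied by (\ref{eq:sum_shears}), and that the family $\{C_v\}$ is an honest decoration, each horocycle being chosen once and the scale invariance of the comparison guaranteeing there is no hidden inconsistency.
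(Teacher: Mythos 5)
Your proposal is correct and takes essentially the same route as the paper's proof: both directions convert bounded lambda lengths into bounded horocyclic arc lengths via (\ref{eq:horoc-lambda}), identify arc-length ratios within a fan with exponentials of partial shear sums, and in the converse build the decoration one vertex at a time with an independently normalized horocycle at each vertex (the paper scales one arc to length $1$ where you scale the supremum to $1$). The only cosmetic difference is that you state the product identity $\alpha_u\alpha_v=\lambda_{u,v}^{-2}$ explicitly, which the paper uses implicitly and comments on in the remark about the hexagon formulas.
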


\begin{proof}
Assume that $h(\F )$ has a decoration with bounded lambda lengths. In other words, there is a choice of a horocycle at each vertex of $h(\F )$ such that the lengths of the cut offs on each edge of $h(\F )$ are bounded between a negative and positive constant. This implies that the lengths of the horocyclic segments cut off by the adjacent edges of the corresponding vertex are bounded between two positive constants for all the horocycles (see equation (\ref{eq:horoc-lambda}) above).

We fix the standard decoration $H_0$ on $\F$ such that all the horocyclic arcs have length $1$. Let $C_p$ be the horocycle of the standard decoration on $\F$ based at the vertex $p$. Let $C_{h(p)}$ be the horocycle of the decoration on $h(\F )$ based at $h(p)$ that has bounded horocyclic segments. We scale $C_{h(p)}$ to a horocycle $C'_{h(p)}$ that has one horocyclic segment $d$ of length $1$. 

Let $A\in \PSL_2(\mathbb{Z})$, $B\in \PSL_2(\R )$ be such that $A^{-1}(p)=B(h(p))=\infty$, $A^{-1}(C_p)=\{ z\in\H :y=1\}$ and $B(C'_{h(p)})=\{ z\in\H :y=1\}$. Additionally we require that the image under $B$ of the edges $E_1,E_2\in h(\F )$ bounding $d$ map to the geodesics with endpoints $0,\infty$ and $1,\infty$ and that $A$ maps edges of $\F$ with endpoints $0,\infty$ and $1,\infty$ onto the edges $h^{-1}(E_1),h^{-1}(E_2)$. The quasisymmetric map $B\circ h\circ A$ fixes $0$, $1$ and $\infty$ and it is the developing map of the shear function $s\circ A^{-1}:\F\to\R$. The horocyclic segments of $B(C'_{h(p)})$ have lengths pinched between two positive constants. The length of the $n$-th horocyclic segment is given by
$$
e^{\sum_{j=1}^n s\circ A^{-1}(E_j)}
$$
for $n\geq 1$ and by
$$
e^{-\sum_{j=n}^0 s\circ A^{-1}(E_j)}
$$
for $n\leq 0$. Therefore we have an upper bound on $
\Big{|}\sum_{j=n}^ms(E_j)\Big{|}.
$

For the converse, assume that
$$
\Big{|}\sum_{j=n}^ms(E_j)\Big{|}\leq M.
$$
for all $n,m$ and all fans $\F^p$. We need to construct a decoration with bounded lambda lengths. As remarked before, this is equivalent to constructing a decoration with bounded horocyclic segments. 

At a vertex $h(p)$ of $h(\F )$, we choose a horocycle $C_{h(p)}$ that has one segment of length $1$. The lengths of the other segments on $C_{h(p)}$ are either of the form $
e^{\sum_{j=1}^ns(E_j)}$ for $n\geq 1$ or $e^{-\sum_{j=n}^0s(E_j)}$ for $n\leq 0$. Therefore all of the horocyclic segments are bounded, and the lambda lengths are bounded.
\end{proof}

\begin{definition}
Following the above result, an orientation preserving homeomorphism of $\S^1$ will be said to be of {\it Penner-Sullivan type} if the corresponding shear function satisfies inequality (\ref{eq:sum_shears}) for all fans of $\mathcal{F}$.
\end{definition}

\begin{remark}
Defining lambda lengths takes into account the relative positions of the horocycles along the edges of the triangulation. To say that lambda lengths are bounded one could expect that we need to think about these relative positions. However, the second part of the proof above does not require us to think about relative positions of the horocycles because of the formulas in the hexagons. As soon as one of the horocyclic arcs has length $1$, our choice of horocycles is automatically the correct one which simplifies the proof.

\end{remark}

\begin{remark}
Theorem \ref{thm:dec-bounded-lambda-l} above provides yet another proof of the Penner-Sullivan pinched lambda lengths theorem. Indeed, the condition implies that each horocyclic segment length is between $e^{-M}$ and $e^M$. Then the condition of Theorem \ref{thm:shears-qs} is satisfied with the constant $e^{2M}$.
\end{remark}

\section{Simultaneous flips and pinched lambda lengths}

Here we are interested in the relationships between decorated ideal triangulations and flip transformations. Given a triangulation, a {\it flip} is an operation that consists in switching the diagonals of disjoint quadrilaterals (see Figure \ref{fig:flip}). These are sometimes referred to as simultaneous flips as opposed to standard flips were only one edge can be flipped at a time. When a surface is triangulated with an infinite number of triangles, these more general flips allow one to connect a much larger class of triangulations by flip sequences. 
\begin{figure}[h!tbp]
	\centering
	\includegraphics[width=10cm]{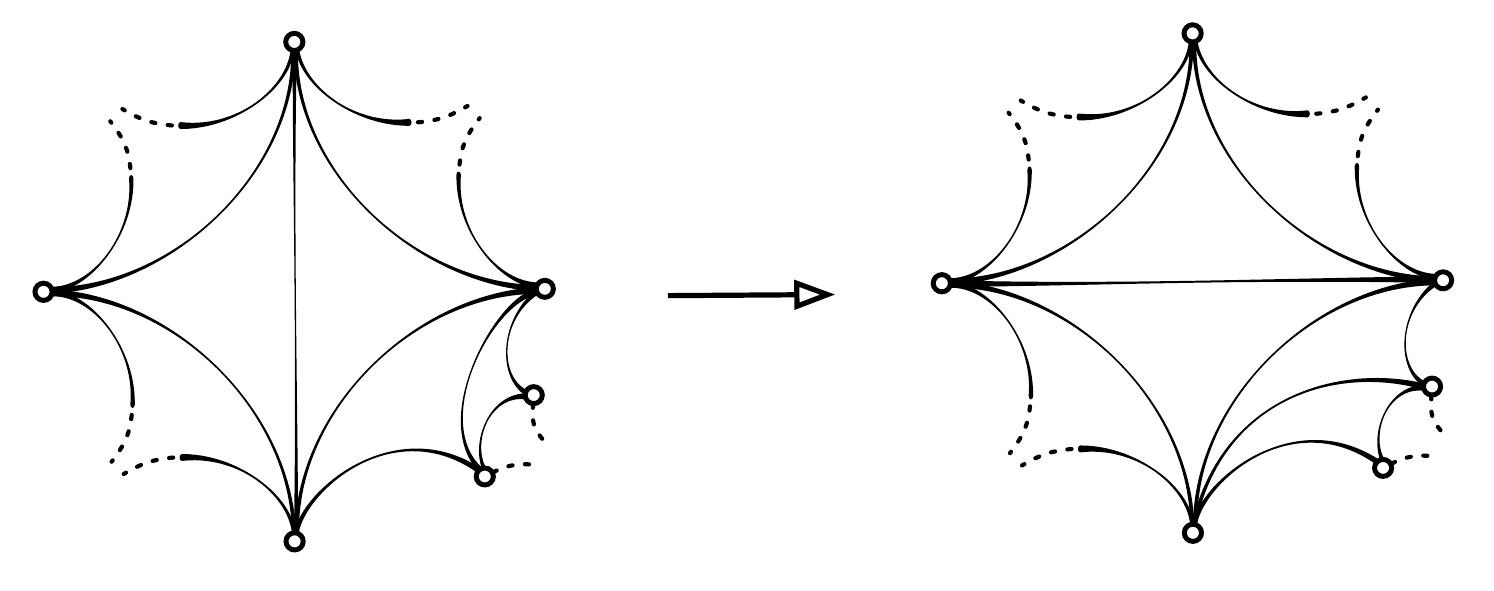}
	\caption{An example of a flip}
	\label{fig:flip}
\end{figure}
Two triangulations are related by a sequence of flips if and only if there exists a universal upper bound on the intersection between the arcs of the triangulations and the other triangulation (see \cite{Fossas-Parlier}). 

\subsection{Pinched lambda lengths under flips}
Let $\tilde{\mathcal{T}}$ be a decorated ideal hyperbolic triangulation of $\H$ with pinched lambda lengths. Let $\mathcal{D}$ be a flip on disjoint quadrilaterals of $\mathcal{T}$.

\begin{proposition}
\label{prop:flip-pinched}
Let $\tilde{\mathcal{T}}$ be a decorated ideal hyperbolic triangulation of $\H$ with pinched lambda lengths. Then the image  $\mathcal{D}(\tilde{\mathcal{T}})$ under a flip $\mathcal{D}$ has pinched lambda lengths. In particular, the developing map $h:\F\to\mathcal{D}(\mathcal{T})$ is of Penner-Sullivan type and thus quasisymmetric.
\end{proposition}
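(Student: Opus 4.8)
The plan is to reduce everything to a single application of the Ptolemy relation (\ref{eq:ptolemy}). Write the pinching hypothesis on $\tilde{\mathcal{T}}$ as $\frac{1}{M}\leq\lambda(E)\leq M$ for all edges $E$. Since $\mathcal{D}$ flips a family of \emph{disjoint} quadrilaterals, the only edges whose lambda lengths change are the diagonals being switched; every other edge of $\mathcal{T}$, in particular all four sides of each flipped quadrilateral, survives into $\mathcal{D}(\mathcal{T})$ with its horocycles unchanged, and hence keeps a lambda length in $[\frac{1}{M},M]$. I would first record the disjointness observation carefully: if some side $F$ of a flipped quadrilateral $Q_1$ were itself the diagonal of another flipped quadrilateral $Q_2$, then one of the two triangles adjacent to $F$ would lie in both $Q_1$ and $Q_2$, contradicting disjointness. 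Thus the sides are genuinely untouched, and their pinching is inherited directly.

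Next I would treat a single flipped quadrilateral, with ideal vertices $a,b,c,d$ in cyclic order, old diagonal $\{a,c\}$ replaced by new diagonal $\{b,d\}$. Keeping the decoration fixed, the lambda length of the new edge is determined by the existing horocycles at $b$ and $d$, and solving (\ref{eq:ptolemy}) gives
$$
\lambda_{b,d}=\frac{\lambda_{a,b}\lambda_{c,d}+\lambda_{b,c}\lambda_{d,a}}{\lambda_{a,c}}.
$$
Since the four sides and the old diagonal all lie in $[\frac{1}{M},M]$, this immediately yields $\frac{2}{M^3}\leq\lambda_{b,d}\leq 2M^3$. The key point is that this estimate is uniform: the same $M$ controls every flipped quadrilateral, so all new diagonals, across the (possibly infinitely many) disjoint quadrilaterals, have lambda lengths in $[\frac{2}{M^3},2M^3]$. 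Combining with the untouched edges, every edge of $\mathcal{D}(\tilde{\mathcal{T}})$ has lambda length in $[\frac{2}{M^3},2M^3]$, so setting $M'=2M^3$ (which dominates $M$ for $M\geq 1$) shows $\mathcal{D}(\tilde{\mathcal{T}})$ is pinched with constant $M'$.

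For the final assertion I would invoke the earlier theory. Pinched lambda lengths on $\mathcal{D}(\tilde{\mathcal{T}})$ are in particular bounded, so by Theorem \ref{thm:dec-bounded-lambda-l} the shear function of the developing map $h$ satisfies the fan-sum bound (\ref{eq:sum_shears}); this is precisely the statement that $h$ is of Penner-Sullivan type. That $h$ is then quasisymmetric follows from Theorem \ref{thm:lambda-pinched} applied to the constant $M'$ (equivalently, from the last remark of Section \ref{sec:prelim}).

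The computation itself is routine, so the only point requiring genuine care is the uniformity across the generally infinite collection of simultaneously flipped quadrilaterals: I must ensure a \emph{single} constant works for all of them at once. This is exactly where disjointness is essential, since it guarantees that each edge is affected by at most one flip; the one-step Ptolemy bound is therefore never iterated, and the constant $2M^3$ does not deteriorate. I do not expect any further obstacle, as the hexagon formula (\ref{eq:horoc-lambda}) already translates pinched lambda lengths into the bounded horocyclic segments needed to apply Theorem \ref{thm:dec-bounded-lambda-l}.
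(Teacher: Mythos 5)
Your proposal is correct and follows essentially the same route as the paper: the paper's proof is precisely the one-step Ptolemy computation yielding $\frac{2}{M^3}\leq\lambda_{b,d}\leq 2M^3$. Your additional remarks on disjointness (so sides of flipped quadrilaterals are untouched and the estimate is never iterated) and the explicit deduction of the Penner-Sullivan/quasisymmetric conclusion via Theorems \ref{thm:lambda-pinched} and \ref{thm:dec-bounded-lambda-l} are details the paper leaves implicit, but they do not change the argument.
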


\begin{proof}
Assume that the diagonal $ac$ is flipped to the diagonal $bd$ inside the rectangle with vertices $a$, $b$, $c$ and $d$ made by the union of two triangles of $\mathcal{T}$. 
The Ptolemy equation (\ref{eq:ptolemy}) gives
$$
\lambda_{b,d}=\frac{\lambda_{a,b}\lambda_{c,d}+\lambda_{b,c}\lambda_{d,a}}{\lambda_{a,c}}.
$$
It follows that
$$
\frac{2}{M^3}\leq\lambda_{b,d}\leq 2M^3.
$$
Thus $\mathcal{D}(\tilde{\mathcal{T}})$ has pinched lambda lengths.
\end{proof}
From this, the following is immediate. 
\begin{corollary}
\label{cor:flips-Farey}
The developing map of any triangulation obtained by finitely many flips starting from the Farey triangulation is quasisymmetric.
\end{corollary}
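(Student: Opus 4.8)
The plan is to run a short induction on the number of flips, using Proposition \ref{prop:flip-pinched} as the inductive step and Theorem \ref{thm:lambda-pinched} to conclude. First I would pin down the base case: the Farey triangulation $\F$ equipped with the standard decoration $H_0$ has pinched lambda lengths. Indeed, $H_0$ is characterized by all horocyclic arcs having length $1$, and the relation (\ref{eq:horoc-lambda}) then forces every lambda length to equal $1$; hence $\F$ is pinched with constant $M_0=1$.

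Next, suppose $\mathcal{T}$ is obtained from $\F$ by a finite sequence of (simultaneous) flips $\mathcal{D}_1,\ldots ,\mathcal{D}_k$, so that $\mathcal{T}=\mathcal{D}_k\circ\cdots\circ\mathcal{D}_1(\F )$. Writing $\tilde{\mathcal{T}}_0$ for the standard decorated Farey triangulation and $\tilde{\mathcal{T}}_i=\mathcal{D}_i(\tilde{\mathcal{T}}_{i-1})$, I would apply Proposition \ref{prop:flip-pinched} at each step: if $\tilde{\mathcal{T}}_{i-1}$ is pinched with constant $M_{i-1}\geq 1$, then $\tilde{\mathcal{T}}_i$ is pinched with constant $M_i=2M_{i-1}^3$. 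The edges left untouched by $\mathcal{D}_i$ keep their lambda lengths, so they stay within $[1/M_{i-1},M_{i-1}]$, while each flipped diagonal obeys the Ptolemy estimate $\frac{2}{M_{i-1}^3}\leq\lambda_{b,d}\leq 2M_{i-1}^3$; one checks that for $M_{i-1}\geq 1$ both ranges lie inside $[1/(2M_{i-1}^3),\,2M_{i-1}^3]$, giving the claimed $M_i$. Iterating from $M_0=1$ produces a finite $M_k$, so the decoration $\tilde{\mathcal{T}}_k$ of $\mathcal{T}$ is pinched.

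Finally, since $\mathcal{T}$ admits a pinched decoration, Theorem \ref{thm:lambda-pinched} yields that its developing map is quasisymmetric; equivalently, the last assertion of Proposition \ref{prop:flip-pinched} already records that pinched lambda lengths force a Penner-Sullivan, hence quasisymmetric, developing map. I do not anticipate a genuine obstacle here, as the statement really is immediate from the proposition. The only point that deserves a moment of care is the \emph{uniformity} of the pinching constant across a single simultaneous flip, which may alter infinitely many disjoint quadrilaterals at once: the bound $\frac{2}{M_{i-1}^3}\leq\lambda_{b,d}\leq 2M_{i-1}^3$ depends only on $M_{i-1}$ and not on the individual quadrilateral, which is exactly what Proposition \ref{prop:flip-pinched} supplies. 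One could instead phrase the whole argument at the level of shears, tracking inequality (\ref{eq:sum_shears}) through each flip, but propagating pinched lambda lengths via the Ptolemy relation is the more direct route.
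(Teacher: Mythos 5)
Your proof is correct and takes essentially the same approach as the paper: start from a pinched standard decoration of $\F$, propagate pinchedness through each of the finitely many flips via Proposition \ref{prop:flip-pinched}, and conclude by Theorem \ref{thm:lambda-pinched}. The paper merely compresses the induction (normalizing the standard decoration so that all lambda lengths equal $2$ rather than $1$), and your explicit bookkeeping of the constants $M_i=2M_{i-1}^3$ is a harmless elaboration of the same argument.
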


\begin{proof}
Choose a standard decoration on the Farey triangulation such that all lambda lengths are equal to $2$. By Proposition \ref{prop:flip-pinched} the resulting decorated triangulation will be pinched and the result now follows from Theorem \ref{thm:lambda-pinched}.
\end{proof}

\begin{proposition} 
\label{prop:flip-qs}
Let $\mathcal{T}=h(\F )$ for a quasisymmetric map $h$ and $\mathcal{D}$ a flip on $\T$. Then the characteristic map of $\mathcal{D}(\mathcal{T})$ is quasisymmetric.
\end{proposition}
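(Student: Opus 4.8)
The plan is to read off quasisymmetry of the characteristic map of $\mathcal{D}(\T)$ from the shear characterization in Theorem \ref{thm:shears-qs}, working throughout with horocyclic arc lengths rather than with the shears directly. Recall that $s(k,n;p)$ is exactly the quotient of the total length of the $n$ consecutive horocyclic arcs immediately to the right of an edge by that of the $n$ arcs immediately to its left, and that this quotient does not depend on the chosen horocycle. Since $h$ is quasisymmetric, Theorem \ref{thm:shears-qs} furnishes a constant $M$ with $1/M\le s(k,n;p)\le M$ for every fan of $\T=h(\F)$ and all $k\in\Z$, $n\in\N$, these ratios being computed geometrically in $\T$. The goal is to establish a uniform bound of the same type for the fans of $\mathcal{D}(\T)$ and then to apply Theorem \ref{thm:shears-qs} in the other direction.

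The key observation is that a flip acts on horocyclic arcs by additive merging and splitting. Fix a vertex $v$ together with a single horocycle $C_v$ that we use for both $\T$ and $\mathcal{D}(\T)$; the fan edges meet $C_v$ in a bi-infinite sequence of points, and the arc lengths are the gaps between consecutive points. If an edge of $\T$ incident to $v$ is flipped, it is deleted from the fan at $v$ and its two neighbouring triangles coalesce, so one junction point disappears from $C_v$ and the two arcs adjacent to that edge are replaced by their sum. If instead $v$ is one of the two vertices opposite the flipped diagonal, a single new edge is inserted inside an existing angle at $v$, so one new junction point appears on $C_v$ and one arc is divided into two pieces that add up to it. In either case the total length over any horocyclic interval is unchanged. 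Disjointness of the flipped quadrilaterals guarantees that these moves take place on pairwise disjoint blocks of triangles; in particular no two flipped edges of a fan are adjacent, each merge combines exactly two arcs, each split produces exactly two, and none of these moves interfere along a fan.

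The remaining and, I expect, only delicate step is to transport the ratio bound through the merges and splits, since they change the number of arcs in a block. By the additivity above, a block of $n$ consecutive arcs of $\mathcal{D}(\T)$ covers a horocyclic interval that is a union of between $\lceil n/2\rceil$ and $2n$ consecutive arcs of $\T$ (with at most one partial arc at each end coming from a split), the bounds reflecting that a merge combines two arcs into one and a split divides one into two. Hence the numerator and denominator of the ratio $s(k,n;v)$ for $\mathcal{D}(\T)$ are sums over two adjacent blocks of $\T$-arcs whose cardinalities differ by at most a factor of four. Comparing two adjacent unequal blocks reduces to the equal-size case: partitioning the longer block into at most four consecutive sub-blocks no longer than the shorter one and applying the $\T$-estimate $s(\,\cdot\,,m;v)\le M$ successively gives $\sum_{\mathrm{right}}\le (M+M^2+M^3+M^4)\sum_{\mathrm{left}}$, and the same argument with the roles reversed controls the reciprocal. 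This yields $1/M'\le s(k,n;v)\le M'$ for $\mathcal{D}(\T)$ with $M'=4M^4$, uniformly over all fans and all $k,n$, and in particular uniformly in the number of flipped edges (which may be infinite). Folding the boundary partial arcs into the constant, Theorem \ref{thm:shears-qs} then shows that the shear function of $\mathcal{D}(\T)$ is induced by a quasisymmetric map, namely its characteristic map, completing the proof.
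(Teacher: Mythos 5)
Your strategy---a direct verification of the fan-ratio condition of Theorem \ref{thm:shears-qs} for $\mathcal{D}(\T)$, using additivity of horocyclic arc lengths under merges and splits---is genuinely different from the paper's proof, and much of it is sound: the merge/split description is correct (disjointness of the flipped quadrilaterals does ensure each arc is involved in at most one such move), and comparing blocks of unequal cardinality by partitioning into sub-blocks is a valid consequence of the equal-block estimate. However, there is a genuine gap at the step you describe as ``folding the boundary partial arcs into the constant.'' The ratio in which a split divides an arc is \emph{not} controlled by the fan data of $\T$ at the vertex $v$: it is determined by where the new diagonal crosses the horocycle $C_v$, i.e.\ by the position of the opposite vertex of the flipped quadrilateral, which is transverse information invisible to the arc lengths at $v$. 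This is not a boundary nuisance that can be absorbed into a constant: for $n=1$, taking the reference edge to be a newly created edge at $v$, the quantity $s(\,\cdot\,,1;v)$ for $\mathcal{D}(\T)$ \emph{is exactly} the split ratio, so your argument gives no bound at all in this case. Concretely, if the flipped quadrilateral has cyclic vertices $a,b,c,d$ with old diagonal $ac$, and $v=b$ is normalized to $\infty$ with $a=0$, $c=1$, then the split ratio is $d/(1-d)$ with $d\in(0,1)$, and nothing in the fan of $\T$ at $b$ constrains $d$.

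The gap is fixable, but it needs an ingredient your proof never invokes: a computation in the above normalization (or quasi-invariance of cross-ratios) shows that this split ratio $d/(1-d)$ equals the $n=1$ fan ratio of $\T$ at the vertex $c$, since the arcs inside the triangles $abc$ and $acd$ are adjacent arcs of the fan at $c$ separated by the edge $ca$; hence Theorem \ref{thm:shears-qs} applied \emph{at the other vertex} bounds it by $M$. With this cross-vertex estimate the partial arcs can indeed be controlled and your argument closes. It is worth noting how the paper sidesteps all of this: it writes the characteristic map as $h_1=h\circ h_1^*\circ h^{-1}$, where $h_1^*$ is the characteristic map of the conjugated flip $h^{-1}\circ\mathcal{D}_1\circ h$ acting on $\F$ itself (after splitting $\mathcal{D}$ into two flips each fixing a triangle, so that characteristic maps can be normalized). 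Since the Farey triangulation does carry a pinched decoration, $h_1^*$ is quasisymmetric by Proposition \ref{prop:flip-pinched}, and quasisymmetry is closed under composition and inversion, so no fan estimate for $\mathcal{D}(\T)$ is ever needed.
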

Before passing to the proof, we remark that in general the triangulation $\mathcal{T}$ does not support a decoration with pinched lambda lengths. Therefore the proof of Proposition \ref{prop:flip-pinched} does not work in general.

\begin{proof}
If the flip $\mathcal{D}$ does not preserve a single triangle of $\mathcal{T}$, then we form two flips $\mathcal{D}_1$ on $\mathcal{T}$ and $\mathcal{D}_2$ on $\mathcal{D}_1(\mathcal{T})$ such that $\mathcal{D}_2\circ\mathcal{D}_1=\mathcal{D}$ and both $\mathcal{D}_1$ and $\mathcal{D}_2$ leave at least one triangle fixed. It is enough to prove that the characteristic map $h_1$ for $\mathcal{D}_1(\mathcal{T})$ is quasisymmetric because in that case $\mathcal{D}_1(\mathcal{T})$ is the image of $\F$ under quasisymmetric  $h_1$ and the same argument applies to the flip $\mathcal{D}_2$ on $\mathcal{D}_1(\mathcal{T})$. 

Let $\Delta_1$ be a complementary triangle of the triangulation $\mathcal{T}$ that is not changed by $\mathcal{D}_1$. 
Let $\Delta_1^*$ be the complementary triangle of $\F$ such that $h(\Delta^*_1)=\Delta_1$. 
We define a flip $\mathcal{D}_1^*$ on $\F$ by $h^{-1}\circ\mathcal{D}_1\circ h$. By definition $\mathcal{D}_1^*(\Delta_1^*)=\Delta_1^*$. Let $h^*_1$ be the characteristic map from $\F$ to $\mathcal{D}_1^*(\F )$ that fixes the edges of $\Delta_1^*$. By the definition of $\mathcal{D}_1^*$ we have that $h_1^*(\F )=h^{-1}\circ f_1\circ h(\F )$. Since both $h_1^*$ and $h^{-1}\circ h_1\circ h$ are characteristic maps from $\F$ to $\mathcal{D}_1^*(\F )$ and they both fix the edges of $\Delta^*_1$ we have 
$$
h_1=h\circ h_1^*\circ h^{-1}. 
$$

The map $h_1$ is quasisymmetric because $h$ is quasisymmetric by the assumption and $h_1^*$ is quasisymmetric by Proposition \ref{prop:flip-pinched}.
\end{proof}











\subsection{Intersection and lambda lengths}

The goal of this section is to prove a type of converse to Proposition \ref{prop:flip-pinched}.

\begin{theorem}\label{thm:finiteflip}
Let $\T$ be a triangulation with pinched lambda lengths with respect to the standard horocycle decoration $H_0$ of $\H$. Then $\T$ is finite flip distance from $\F$, the standard Farey triangulation. 
\end{theorem}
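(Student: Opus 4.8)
\emph{The plan is to use the Fossas--Parlier criterion} cited above: $\T$ is finite flip distance from $\F$ if and only if there is a uniform bound on the geometric intersection numbers in both directions, i.e. every edge of $\T$ meets a bounded number of edges of $\F$ and every edge of $\F$ meets a bounded number of edges of $\T$. The starting point is to translate the pinched hypothesis into elementary terms. With the standard decoration $H_0$ every edge of $\F$ has lambda length $1$, and a direct computation with the horocycle formula $(\ref{eq:horoc-lambda})$ shows that the lambda length of the geodesic joining $\frac{p}{q}$ and $\frac{r}{s}$ (in lowest terms) equals the Farey determinant $|ps-qr|$. Thus pinched lambda lengths for $\T$ is exactly the statement that $|ps-qr|\le M$ for every edge of $\T$; equivalently, by $(\ref{eq:horoc-lambda})$ again, all horocyclic arcs cut out by $\T$ on the horocycles of $H_0$ have length in $[M^{-3},M^3]$. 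I will use both formulations.

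For the first direction (each edge of $\T$ meets boundedly many edges of $\F$), I would normalize using $\PSL_2(\Z)$, which preserves $\F$ and $H_0$ and hence all intersection numbers and lambda lengths. Sending one endpoint of a given edge $E$ of $\T$ to $\infty$, the edge becomes the geodesic from $\infty$ to some $\frac{a}{N}$ with $\gcd(a,N)=1$ and $N=|ps-qr|\le M$, and a further integer translation places $\frac{a}{N}$ in $[0,1)$. The edges of $\F$ crossed by this vertical geodesic are exactly those recorded by the Stern--Brocot descent towards $\frac{a}{N}$, whose number equals the sum of the partial quotients of the continued fraction of $\frac{a}{N}$. A short induction on the continuant recursion gives that this sum is at most $N$, so $E$ meets at most $M$ edges of $\F$.

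For the second direction (each edge of $\F$ meets boundedly many edges of $\T$) I would again normalize, this time placing the given Farey edge at $f=(0,\infty)$, the imaginary axis. First, the edges of $\T$ crossing $f$ are pairwise disjoint, hence totally ordered by nesting of their endpoints. Next I would exploit the bounded arcs: the vertices of $\T$ adjacent to $\infty$ cut the horocycle $\{y=1\}$ into arcs of length in $[M^{-3},M^3]$, so these vertices form a bi-infinite sequence with consecutive Euclidean gaps at most $M^3$. Since no edge of $\T$ can cross a vertical edge of the fan of $\T$ at $\infty$, no edge of $\T$ with two finite endpoints can straddle such a vertex; hence every such edge has Euclidean length at most $M^3$, and in particular each edge crossing $f$ does so at height at most $M^3/2$. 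Applying the isometry $z\mapsto -\tfrac1z$ (which lies in $\PSL_2(\Z)$, fixes $f$ and exchanges the roles of $0$ and $\infty$) yields the symmetric lower bound: every edge crossing $f$ does so at height at least $2M^{-3}$.

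Consequently all edges of $\T$ crossing $f$ meet the compact sub-segment $S=f\cap\{2M^{-3}\le y\le M^3/2\}$, whose hyperbolic length is at most $6\log M$. The final step is to bound, uniformly in $M$, the number of edges of $\T$ meeting such a segment. Here I would use that pinched arcs force the decorated triangulation to have bounded geometry away from the cusps: outside the horoballs of $H_0$ the triangles of $\T$ have inradius bounded below by some $\rho(M)>0$, so a geodesic segment staying outside these horoballs spends hyperbolic length at least $\rho(M)$ in each triangle it traverses and hence crosses at most $6\log M/\rho(M)+1$ edges. The main obstacle is precisely this last quantitative step: one must control the finitely many large horoballs (those at vertices of small denominator, whose size does not shrink) that $S$ may enter, and rule out that the edges crossing $f$ form an arbitrarily long zig-zag of thin triangles. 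As a sanity check, in the model case where the crossed edges form a fan with apex $w$ spanning from $0$ to $\infty$, a short computation with the developing map of the preliminaries gives $\lambda(f)=\lambda_{w,0}\,\lambda_{w,\infty}\sum_j \alpha_j$; since $\lambda(f)=1$, $\lambda_{w,0},\lambda_{w,\infty}\ge 1$ and each arc $\alpha_j\ge M^{-3}$, this forces at most $M^3$ crossings, which is the behaviour the general bound must reproduce.
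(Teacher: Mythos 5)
Your overall strategy is the same as the paper's: verify the two--sided bounded--intersection hypothesis and invoke the Fossas--Parlier flip criterion. Your first half, bounding $i(E,\F)$ for $E\in\T$, is correct and genuinely different from the paper's treatment: the paper argues metrically (pinching bounds the penetration of arcs of $\T$ into the $H_0$-horoballs, the decoration is retracted by $\delta_0+1$ so that $\T$ avoids the retracted horoballs, the truncated arcs then have uniformly bounded length, and each crossing of $\F$ costs a definite amount of length), whereas you use that with respect to $H_0$ the lambda length of the edge joining $\frac{p}{q}$ to $\frac{r}{s}$ is the Farey determinant $|ps-qr|$, so pinching means $|ps-qr|\le M$, and the number of Farey edges crossed by the geodesic from $\infty$ to $\frac{a}{N}$ is controlled by the sum of partial quotients of $\frac{a}{N}$, which is at most $N$. (``Equals'' is slightly off --- the crossing count is that sum minus one --- but the inequality you need is correct.) This is clean and quantitative.

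The genuine gap is in the second direction, and you flag it yourself. After reducing to the segment $S=f\cap\{2M^{-3}\le y\le M^3/2\}$, you must bound below the length of each component of $S\cap\Delta$ for $\Delta$ a triangle of $\T$ met by $f$; your ``inradius bounded below outside the horoballs'' step does not do this as stated, because $S$ \emph{does} penetrate the $H_0$-horoballs at $0$ and at $\infty$ to depth roughly $3\log M$, and inside a horoball the triangles of $\T$ can be arbitrarily thin --- so the long zig-zag of thin triangles is not excluded, and the proof is incomplete. The observation that closes it is the following. Two consecutive edges of $\T$ crossed by $f$ are sides of one triangle of $\T$, hence share an ideal vertex $v$; since two distinct geodesics with a common ideal endpoint are disjoint in $\H$, no edge crossing $f$ can have $0$ or $\infty$ as an endpoint, so $v\in\Q\setminus\{0\}$. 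The region where two sides of a $\T$-triangle meeting at $v$ come within distance $\epsilon$ of each other lies inside the $H_0$-horoball at $v$ once $\epsilon$ is a definite multiple of the horocyclic arc of that triangle at $v$, and that arc is at least $M^{-3}$; on the other hand the Ford circle at any $v\in\Q\setminus\{0\}$ is disjoint from the imaginary axis (its center $(p/q,\tfrac{1}{2q^2})$ is at distance $|p|/q>\tfrac{1}{2q^2}$ from it). Hence every component of $S\cap\Delta$ has length at least a constant times $M^{-3}$, so $i(f,\T)\le C M^{3}\log M$. Equivalently, you could import the paper's retraction device: after retracting the decoration, all thin parts of $\T$-triangles are hidden inside horoballs that $f$ meets only at its two endpoints, where no crossings occur. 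Interestingly, this is also the direction the paper itself treats only implicitly (its lemma and proof cover only $i(a,\F)$ for $a\in\T$, with the ``vice versa'' asserted in prose), so supplying this step explicitly would strengthen, not just match, the written argument.
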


To prove it, we use a few preliminary results. The first one states that an arc of $\T$ never travels too deeply into a horocycle unless it ends at its point of tangency on $\hat{\mathbb{R}}$. More precisely:

\begin{lemma} There exists a constant $K'$ such that any arc $a\in \T$ satisfies 
$$\ell(a \cap D_C) \leq K'$$
for any horodisk $D_C$ with horocycle boundary $C\in H_0$ that is not tangent to one of the endpoints of $a$.
\end{lemma}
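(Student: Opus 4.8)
The plan is to normalize the picture by an element of $\PSL_2(\R )$ taking the tangency point $q$ of $C$ to $\infty$ and $C$ to the horocycle $\{ z=x+yi : y=1\}$; since both lambda lengths and hyperbolic lengths are Möbius invariant, no information is lost. In this frame the horodisk becomes $D_C=\{ y>1\}$, and the arc $a$ (which does not end at $q$) becomes a Euclidean semicircle with two finite real endpoints $x_1<x_2$ and radius $r=(x_2-x_1)/2$. A direct computation of the hyperbolic length of the sub-arc of this semicircle lying in $\{ y>1\}$ shows that $\ell (a\cap D_C)=0$ when $r\leq 1$ and $\ell (a\cap D_C)=2\log\big( r(1+\sqrt{1-1/r^2})\big)\leq 2\log(2r)$ when $r>1$. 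Thus the lemma reduces to producing a uniform bound on the Euclidean diameter $2r$ of $a$ in this normalization.

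To bound $2r$ I would use the fan of $\T$ at $q$. Its edges are the geodesics joining $q=\infty$ to its neighbours in $\T$, namely the vertical lines $x=v_j$ for the consecutive neighbours $\{ v_j\}_{j\in\Z}$ of $q$. Two edges of the same triangulation cannot cross, so $a$ meets none of these verticals; since moreover $a$ does not end at $q$, both of its endpoints $x_1,x_2$ are finite and no $v_j$ lies strictly between them. Hence $x_1,x_2$ lie in a single closed sector $[v_i,v_{i+1}]$ and $2r=x_2-x_1\leq v_{i+1}-v_i$. (The neighbours genuinely exhaust $\R$: the pinching forces the gaps $v_{j+1}-v_j$ to be bounded below, so the $v_j$ tend to $\pm\infty$ and the sectors cover $\R$.)

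It remains to bound the sector width $v_{i+1}-v_i$, and here the pinched hypothesis enters. The vertices $q,v_i,v_{i+1}$ span a triangle of $\T$, and the $H_0$-horocyclic arc of this triangle lying on $C=\{ y=1\}$ has hyperbolic length exactly $\int_{v_i}^{v_{i+1}}dx=v_{i+1}-v_i$. By (\ref{eq:horoc-lambda}) this arc length equals $\lambda_{v_i,v_{i+1}}/(\lambda_{q,v_i}\lambda_{q,v_{i+1}})$, which by the pinching $1/M\leq\lambda\leq M$ is at most $M^3$. Combining, $2r\leq M^3$ and therefore $\ell (a\cap D_C)\leq 2\log(M^3)=6\log M$, so $K'=6\log M$ works. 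The only real subtlety is the planarity step in the second paragraph — verifying that the endpoints of $a$ must land in one sector of the fan — together with checking that the normalization genuinely reduces the statement to bounding the Euclidean width; the remaining ingredients are the elementary penetration-length computation and a single application of (\ref{eq:horoc-lambda}).
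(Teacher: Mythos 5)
Your proposal is correct, but it follows a genuinely different route from the paper. The paper's proof is essentially one line, and the mechanism it invokes is the definition of the lambda length of $a$ itself: because the horodisks of the standard decoration $H_0$ (the Ford circles) are pairwise disjoint or tangent, the segment $a\cap D_C$ lies entirely between the two points where $a$ meets the horocycles at its own endpoints, so $\ell(a\cap D_C)\leq \delta(a)=2\log\lambda(a)\leq 2\log M$; unbounded penetration would force unbounded lambda lengths. Your argument never uses the disjointness of the horodisks of the decoration. Instead you bound the Euclidean width of $a$ in the normalization $q\mapsto\infty$, $C\mapsto\{y=1\}$, using (i) the fact that $a$ cannot cross the fan of $\T$ at $q$, so its endpoints lie in one sector, and (ii) the horocyclic-arc formula (\ref{eq:horoc-lambda}) applied to the triangle of $\T$ spanned by consecutive fan edges, which bounds each sector width by $M^3$. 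This costs more work and a worse constant ($6\log M$ versus $2\log M$), but it buys generality: it applies verbatim to any decoration with pinched lambda lengths, even one whose horodisks overlap, and it uses the pinching only on the edges of the triangle of $\T$ at $q$ rather than on $a$ itself.

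Two points in your write-up deserve patching. First, your parenthetical justification that the sectors cover $\R$ is incomplete: gaps bounded below by $1/M^3$ give $v_j\to\pm\infty$ only once you know the fan at $q$ is bi-infinite, and a priori the fan could have a last edge on one side. This is ruled out not by pinching but by the combinatorial fact that every edge of a locally finite ideal triangulation bounds exactly two triangles: the triangle on the far side of a putative last fan edge $(q,v_{\max})$ has that whole geodesic as a side, hence is again incident to $q$ and produces a further neighbour. Second, when applying (\ref{eq:horoc-lambda}), the quantity the formula computes is the length of the segment of $C$ between the two fan edges (which is exactly $v_{i+1}-v_i$, as a direct computation in the upper half-plane confirms), not the part of $C$ literally inside the triangle, which is strictly smaller whenever the opposite edge rises above $C$; your inequality $v_{i+1}-v_i\leq M^3$ needs the former reading, so it is worth saying so explicitly since the paper's phrasing of (\ref{eq:horoc-lambda}) is loose on this point.
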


\begin{proof}
This follows immediately from the pinched lambda lengths condition. Indeed, if there is no upper bound on this length, then the lambda lengths are arbitrarily large.
\end{proof}

Observe that this implies a universal upper bound on the distance between the intersection of an arc $a$ with a horocycle decoration $H_0$, and the horocycle. It is convenient to think of the depth of an arc here: an arc is said to have depth $\delta_a$ if for each $C\in H_0$, $a\cap C$ is at most Hausdorff distance $\delta_a$ from $\partial C$. 

If we let $\delta_0:= \sup_{a\in \T}\{\delta_a\}$ then we can replace $H_0$ with a new horocycle decoration $H_r$ obtained by retracting $H_0$ by $\delta_0+1$ (see Figure \ref{fig:retract}). 

\begin{figure}[h!tbp]
	\centering
	\includegraphics[width=4cm]{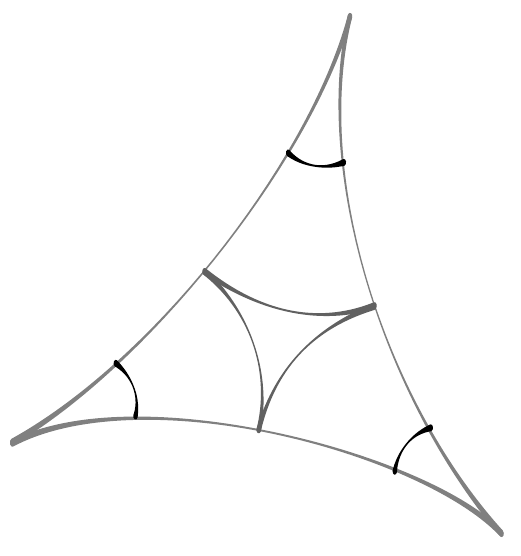}
	\caption{The standard decoration is replace by a retracted decoration}
	\label{fig:retract}
\end{figure}

The result of this is that $\T$ and $H_r$ are, by construction, completely disjoint. 

Our second preliminary result involves a uniform upper bound on the intersection between arcs of $\T$ and $\F$ and vice versa. 

\begin{lemma}
There exists $K''$ such that for any $a\in \T$ we have 
$$
i(a,\F) \leq K''.
$$
\end{lemma}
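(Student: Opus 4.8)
The plan is to bound the geometric intersection number $i(a,\F)$ between an arc $a\in\T$ and the whole Farey triangulation $\F$ by a universal constant, exploiting the previous lemma that arcs of $\T$ stay uniformly shallow inside the standard horocycles $H_0$. First I would retract the decoration to $H_r$ as already set up, so that $\T$ is disjoint from the horocycles $H_r$; the complement of the horodisks bounded by $H_r$ is a disjoint union of the hexagonal/triangular ``cores'' of the Farey triangles, and each arc $a$ then lives in this complement together with finitely many excursions that are forced to stay bounded. The key observation is that each time $a$ crosses an edge $E\in\F$, it must pass through the core region of a Farey triangle, and by the shallowness bound $K'$ it can spend only a bounded amount of its length near any given horocycle. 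So I would reduce the combinatorial count $i(a,\F)$ to a length count: show that consecutive crossings of $\F$-edges along $a$ are separated by a definite amount of hyperbolic length.

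The main technical step is a \emph{uniform lower bound on the length of $a$ between two consecutive intersections with $\F$}. Here the geometry of $\F$ enters: the edges of the Farey triangulation that are disjoint from $H_r$ bound a region of definite size (the ``truncated'' Farey triangles have a compact core of uniformly bounded diameter up to the $\PSL_2(\Z)$-symmetry, since $\PSL_2(\Z)$ acts with compact quotient on the core). Thus if $a$ crosses two $\F$-edges, the sub-arc between them either stays in one truncated triangle (length bounded below by the injectivity-type constant for that triangle's core) or it must dip into a horoball and come back, in which case the shallowness bound $K'$ from the previous lemma again forces a definite length. In either case I would extract a constant $c>0$ with: length of $a$ between consecutive $\F$-crossings $\geq c$.

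To convert this into a bound on $i(a,\F)$, I would combine it with an \emph{upper bound on the total length of $a$ that is relevant}, which comes from the pinched-lambda-length hypothesis on $\T$ itself. Since $\T$ has pinched lambda lengths with respect to $H_0$, the $\delta(a)$ values are bounded, meaning each arc $a$ of $\T$ has its endpoints' horocycle cut-offs at uniformly bounded signed distance; equivalently, the portion of $a$ outside the retracted horoballs $H_r$ has bounded length, while the unbounded parts of $a$ (running out to its two ideal endpoints) are inside horoballs tangent to $a$'s own endpoints and hence, by the previous lemma's proof, cross no $\F$-edge beyond a bounded initial stretch. Dividing the total ``essential'' length by the per-crossing lower bound $c$ then yields the uniform constant $K''$.

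The hard part will be step two: making precise and uniform the claim that an arc cannot accumulate intersections with $\F$ without accumulating length. The subtlety is that near an ideal vertex, infinitely many Farey edges fan out, and an arc $a$ passing close to such a vertex could in principle cross many of them over a short Euclidean (though not hyperbolic) stretch. Controlling this requires the shallowness bound of the preceding lemma precisely: because $a$ is disjoint from the retracted horoballs, it cannot approach any Farey vertex too closely, so the fan of edges it can meet near that vertex is finite and the hyperbolic length consumed per crossing stays bounded below. I would formalize this using the $\PSL_2(\Z)$-equivariance to reduce to a single vertex (say $\infty$ in the upper half-plane), where the Farey edges emanating from $\infty$ are the vertical geodesics over consecutive integers, and a uniformly shallow arc crossing several of them must have length bounded below by the sum of horizontal displacements measured in the hyperbolic metric at the fixed height $H_r$.
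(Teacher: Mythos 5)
Your proposal is correct and takes essentially the same approach as the paper: both truncate along the retracted decoration $H_r$, use the pinched lambda lengths to get a uniform upper bound on the length of an arc $a\in\T$ in the truncated plane $\H_{H_r}$, and convert the intersection count into length via a uniform per-crossing lower bound, with the retraction by $\delta_0+1$ serving precisely (as the paper itself notes) to rule out cheap crossings of the edge fans near vertices of $\F$. Your write-up simply makes explicit the per-crossing length bound and the $\PSL_2(\Z)$-equivariance/compactness argument that the paper leaves implicit.
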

\begin{proof}
Cutting $\F$ along the horocycles of $H_r$ (by which we mean removing the horodisks), results in a collection of hexagons. These hexagons pasted together is the ``large" connected component of $\H\setminus H_r$. We now consider $\T$ on this truncated hyperbolic plane $\H_{H_r}$ and we continue to denote it $\T$. An arc $a$ of $\T$ on $\H_{H_r}$ is finite length, and leaves from one of the horocycle boundaries, crosses a finite number of arcs of $\F$, and then returns to another horocycle boundary in its other endpoint. Each intersection point with an arc of $\F$ creates length: that is there is a lower bound of its length in terms of intersection with $\F$ that goes to infinity as the intersection goes to infinity (see Figure \ref{fig:interlength}). 

\begin{figure}[h!tbp]
	\centering
	\includegraphics[width=6cm]{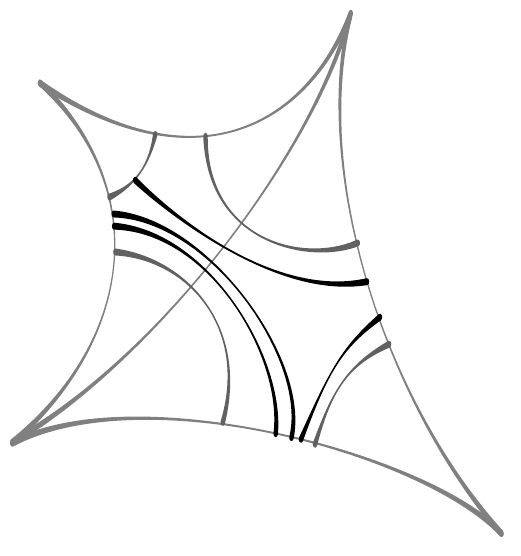}
	\caption{Each time an arc of $\T$ crosses an arc of $\F$ this forces it to have a certain length}
	\label{fig:interlength}
\end{figure}

Note that this is exactly why we retracted by $\delta_0 + 1$. As we have a universal upper bound on the length of $a$ restricted to $\H_{H_r}$, this proves a uniform upper bound on $i(a,\F)$ as desired.
\end{proof}

The proof of Theorem \ref{thm:finiteflip} now follows directly from the main result of \cite{Fossas-Parlier} which says that the finite intersection property above implies finite flip distance. From Theorem \ref{thm:finiteflip} and Proposition \ref{prop:flip-qs} we now have, as an immediate corollary, a characterization of being of Penner-Sullivan type in terms of flips (Theorem \ref{thm:mainB} from the introduction):

\begin{corollary}\label{cor:ps-flip}
A homeomorphism $h:\hat{\R}\to\hat{\R}$ such that $h(\hat{\mathbb{Q}})=\hat{\mathbb{Q}}$ is of Penner-Sullivan type if and only if the triangulations $\F$ and $h(\F)$ are finite flip distance apart. 
\end{corollary}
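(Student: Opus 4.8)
The plan is to read off both implications directly from Theorems~\ref{thm:dec-bounded-lambda-l} and~\ref{thm:finiteflip} and Proposition~\ref{prop:flip-pinched}. I will use throughout that, by Theorem~\ref{thm:dec-bounded-lambda-l}, $h$ being of Penner--Sullivan type is the same as $h(\F)$ admitting a decoration with pinched lambda lengths, and that the flip relation is symmetric (any flip is undone by flipping its new diagonals back), so that ``finite flip distance'' is unambiguous.

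For the implication ``Penner--Sullivan type $\Rightarrow$ finite flip distance'', I would start from the pinched decoration of $h(\F)$ furnished by Theorem~\ref{thm:dec-bounded-lambda-l} and feed it into the argument behind Theorem~\ref{thm:finiteflip}. That argument is purely about intersection numbers: it bounds the depth to which an arc of $h(\F)$ enters any horodisk, retracts the horocycles so that $h(\F)$ becomes disjoint from them, converts the resulting length bound into a uniform bound $i(a,\F)\le K''$ for all $a\in h(\F)$, and then quotes \cite{Fossas-Parlier}. Since none of these steps uses anything about the decoration beyond pinchedness of the lambda lengths of $h(\F)$ and the uniform shape of the Farey hexagons, the conclusion that $\F$ and $h(\F)$ are finite flip distance apart follows.

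For the converse I would fix a finite sequence of (simultaneous) flips $\F=\T_0,\T_1,\dots,\T_n=h(\F)$ and decorate $\F$ by the standard decoration in which every lambda length equals $2$, which is pinched. Because a flip changes only edges and leaves the vertex set $\hat{\Q}$ and the horocycles untouched, the same horocycles decorate every $\T_i$; Proposition~\ref{prop:flip-pinched}, whose Ptolemy estimate treats each quadrilateral of a simultaneous flip independently, then shows by induction that every $\T_i$, and in particular $\T_n=h(\F)$, carries a decoration with pinched lambda lengths. Theorem~\ref{thm:dec-bounded-lambda-l} turns this back into the statement that $h$ is of Penner--Sullivan type, which completes the equivalence.

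The step I expect to require the most care is the first implication, precisely the use of Theorem~\ref{thm:finiteflip} with the decoration coming from Theorem~\ref{thm:dec-bounded-lambda-l} rather than with the standard decoration for which that theorem is phrased. One must check that the two lemmas behind Theorem~\ref{thm:finiteflip} really use only pinchedness of the lambda lengths of $h(\F)$ (for the length bound on its arcs) together with the uniform truncated geometry of $\F$ under a standard decoration (for the length-to-intersection conversion), so that the discrepancy between the two decorations is harmless. The remaining points---symmetry of the flip relation, preservation of pinchedness under disjoint simultaneous flips, and the invariance of the chosen horocycles under flips---are routine once this is settled.
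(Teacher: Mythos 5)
Your overall architecture is the same as the paper's: the forward implication via Theorem~\ref{thm:dec-bounded-lambda-l} plus Theorem~\ref{thm:finiteflip}, and the converse via the lambda-length-$2$ decoration of $\F$, the Ptolemy estimate of Proposition~\ref{prop:flip-pinched} applied inductively along the flip sequence, and Theorem~\ref{thm:dec-bounded-lambda-l} to return to the shear condition. Your converse is correct and matches the paper (indeed you cite the right ingredient: the paper points to Proposition~\ref{prop:flip-qs} at this spot, but Proposition~\ref{prop:flip-pinched} is what is actually used). The problem is the forward direction, at precisely the point you flag and then dismiss as harmless. You claim the two lemmas behind Theorem~\ref{thm:finiteflip} ``use only pinchedness of the lambda lengths of $h(\F)$ together with the uniform truncated geometry of $\F$.'' That is not true. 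The first lemma bounds the depth to which an arc of $\T=h(\F)$ penetrates a horodisk of the \emph{standard} decoration $H_0$ (the Ford circles), and its one-line proof needs the lambda lengths of $\T$ computed \emph{with respect to $H_0$} to be bounded; likewise the second lemma needs the length of an arc of $\T$ outside the retracted $H_0$-horodisks to be bounded. Theorem~\ref{thm:dec-bounded-lambda-l} hands you pinchedness with respect to \emph{some} decoration $H$, and that only controls penetration into the $H$-horodisks. Nothing in your argument controls the ratio between the horocycles of $H$ and the Ford circles, and that ratio is exactly what governs intersection with $\F$.

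Concretely, for $h(z)=Nz$ the push-forward decoration $h(H_0)$ has \emph{all} lambda lengths equal to $1$, yet the arc $h((0,1))$, the geodesic from $0$ to $N$, dives to depth about $\log(N/2)$ into the Ford disk at $\infty$ and crosses $N-1$ edges of the Farey fan there. So no bound on $i(a,\F)$ can be extracted from the pinching constant of an arbitrary decoration plus the uniform geometry of $\F$: any proof of this implication must additionally show that for a Penner--Sullivan map preserving $\hat{\Q}$ the pinched decoration is comparable to $H_0$, i.e.\ that $H_0$ itself gives pinched lambda lengths on $h(\F)$. This is an arithmetic statement about how $h(\F)$ sits on $\hat{\Q}$ --- pinchedness of an abstract decoration sees only the relative positions of the vertices, never their denominators --- and it is supplied neither by your sketch nor by the two lemmas you invoke. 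Worse, it is not a routine check: gluing affine (hence rational-vertex) copies of $\F$ below the Farey edges $(n,n+1)$, with apex at the rational $n+\tfrac12+\tfrac1{K_n}$ whose denominator $K_n$ tends to infinity, produces triangulations with vertex set $\hat{\Q}$ carrying uniformly pinched decorations, while the top edge $(n,\,n+\tfrac12+\tfrac1{K_n})$ crosses on the order of $K_n$ Farey edges. The paper's own derivation is equally compressed at this point, so you have faithfully reproduced its route; but the justification you add for why the decoration discrepancy is harmless is exactly the step that fails, and it is where the real content (or a real difficulty) of the forward implication lies.
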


The above corollary characterizes when a homeomorphism $h$ that preserves $\hat{\mathbb{Q}}$ is of Penner-Sullivan type in terms of the intersections of $\F$ and $h(\F )$. We pose the following problem.

\vskip .2 cm

\noindent {\it Open problem}: Characterize when a homeomorphism  $h:\hat{\R}\to\hat{\R}$ such that $h(\hat{\mathbb{Q}})=\hat{\mathbb{Q}}$ is quasisymmetric in terms of the intersection properties of $\F$ and $h(\F )$.

\section{The universal modular group}

The above characterization of quasisymmetric maps encourages a more systematic study of triangulations of $\H$ with vertex set $\hat{\mathbb{Q}}$. The quasiconformal mapping class group $\QMCG(\H )$ of the hyperbolic plane consists of all quasisymmetric maps of $\hat{\mathbb{R}}$ without requiring to fix three points (see \cite{GardinerLakic} and \cite{MarkovicSaric}). Somewhat surprisingly, the group $\QMCG(\H )$, which acts by isometries on the universal Teichm\"uller space $T(\H)$, contains the whole space and more. In order to make the theory more in line with finite surfaces (which was Penner's original intention) we introduce a countable group acting on the universal Teichm\"uller space (the normalized quasisymmetric maps) and the space of Penner-Sullivan maps arising by the flip construction.

\begin{definition}
An {\it allowable triangulation} of $\H$ is a locally finite ideal triangulation whose set of vertices is $\hat{\mathbb{Q}}$. (By locally finite we mean that every compact region of the hyperbolic plane intersects a finite number of edges of the triangulation.)
\end{definition}

\begin{definition}
Two allowable triangulations $\T_1$ and $\T_2$ have the {\it finite intersection property} if 
$$
\sup_{\alpha_1\in\T_1}i(\alpha_1, \T_2)<\infty
$$
and
$$
\sup_{\alpha_2\in\T_2}i(\alpha_2, \T_1)<\infty .
$$
\end{definition}

As we have just seen, the maps of Penner-Sullivan type that preserve the rationals are of the following form:
\begin{definition}
The set $\mathcal{M}(\mathbb{H})$ is the set of all homeomorphisms $h: \hat{\mathbb{R}}\to\hat{\mathbb{R}}$ preserving $\hat{\mathbb{Q}}$ such that $\F$ and $h(\F )$ have the finite intersection property. 
\end{definition}

\begin{lemma}
\label{lem:transitive-fip}
The finite intersection property on allowable triangulations is transitive.
\end{lemma}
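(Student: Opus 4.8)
The plan is to use $\T_2$ as an intermediary, exploiting the two given bounds to control intersections between $\T_1$ and $\T_3$. Concretely, I would take three allowable triangulations $\T_1,\T_2,\T_3$ with $\T_1,\T_2$ and $\T_2,\T_3$ each having the finite intersection property, set
$$
N_{12}=\sup_{\alpha_1\in\T_1}i(\alpha_1,\T_2),\qquad N_{23}=\sup_{\alpha_2\in\T_2}i(\alpha_2,\T_3),
$$
both finite by hypothesis, and aim to bound $i(\alpha_1,\T_3)$ uniformly in $\alpha_1\in\T_1$. The reverse bound $\sup_{\alpha_3\in\T_3}i(\alpha_3,\T_1)<\infty$ then follows verbatim by interchanging the roles of $\T_1$ and $\T_3$ (using $N_{32}$ and $N_{21}$ in place of $N_{12}$ and $N_{23}$), which together give the finite intersection property for $\T_1$ and $\T_3$.

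First I would fix an arc $\alpha_1\in\T_1$. Since it crosses at most $N_{12}$ edges of $\T_2$, these crossing points cut $\alpha_1$ into at most $N_{12}+1$ subsegments, each contained in the closure of a single complementary triangle of $\T_2$: between two consecutive crossings the arc cannot pass from one complementary region to another without meeting an edge of $\T_2$. It therefore suffices to bound, uniformly, the number of arcs of $\T_3$ met by a single such subsegment sitting inside one fixed complementary triangle $t$ of $\T_2$.

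The key local estimate is the following. Let $t$ be a complementary triangle of $\T_2$ with boundary edges $e_1,e_2,e_3\in\T_2$. Any arc $\beta_3\in\T_3$ meeting the interior of $t$ is a complete geodesic with ideal endpoints, so to traverse the interior it must enter and leave through $\partial t$; a short case analysis (entering across an edge, or through an ideal vertex and then necessarily exiting across the opposite edge, the only remaining alternative being that $\beta_3$ coincides with an edge of $t$ and so does not meet the interior) shows that $\beta_3$ crosses at least one of $e_1,e_2,e_3$. As distinct complete geodesics meet at most once, the number of arcs of $\T_3$ meeting the interior of $t$ is at most $i(e_1,\T_3)+i(e_2,\T_3)+i(e_3,\T_3)\le 3N_{23}$, whence the subsegment of $\alpha_1$ inside $t$ meets at most $3N_{23}$ arcs of $\T_3$. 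Combining this with the previous step, and using that $\alpha_1$ crosses each arc of $\T_3$ at most once, I obtain
$$
i(\alpha_1,\T_3)\le (N_{12}+1)\cdot 3N_{23}
$$
for every $\alpha_1\in\T_1$, as desired.

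I expect the only genuinely delicate point to be the local estimate in the third step: the degenerate configurations, where an arc of $\T_3$ enters $t$ through an ideal vertex it shares with $t$, or actually coincides with one of the edges $e_1,e_2,e_3$, are exactly the places where the naive ``it must cross an edge'' assertion could fail. Once these cases are pinned down (the first forces exit across the opposite edge, the second contributes nothing to the interior), the remainder is routine bookkeeping with the two given bounds.
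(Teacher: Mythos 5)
Your argument follows essentially the same route as the paper's: both chain the two given bounds through the complementary triangles of $\T_2$, using the fact that a complete geodesic meeting the interior of an ideal triangle must cross one of its edges (the paper phrases this as a covering count, with the slightly larger constant $9(n+2)(m+2)$; your local estimate is stated more carefully). There is, however, one overlooked degenerate configuration: an arc $\beta_3\in\T_3$ that \emph{coincides} with an edge $e\in\T_2$ crossed by $\alpha_1$ contributes to $i(\alpha_1,\T_3)$, yet it meets the interior of no complementary triangle of $\T_2$, so it escapes your count entirely. Indeed, your stated inequality $i(\alpha_1,\T_3)\le (N_{12}+1)\cdot 3N_{23}$ is literally false when $\T_3=\T_2$: then $N_{23}=0$ while $i(\alpha_1,\T_3)=i(\alpha_1,\T_2)$ may be positive. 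The repair is immediate and does not affect the conclusion: each such $\beta_3$ is one of the at most $N_{12}$ edges of $\T_2$ crossed by $\alpha_1$, so
\begin{equation*}
i(\alpha_1,\T_3)\le (N_{12}+1)\cdot 3N_{23}+N_{12},
\end{equation*}
which is all that finiteness requires; the symmetric bound and the rest of your argument then go through as written.
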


\begin{proof}
Assume that $\T_1$ and $\T_2$, and $\T_2$ and $\T_3$ have the finite intersection property. We will show that $\T_1$ and $\T_3$ have the finite intersection property. Let $n=\sup_{\alpha_1\in\T_1} i(\alpha _1,\T_2)$. Then any arc $\alpha_1\in\T_1$ is covered by at most $n+2$ complementary triangles of $\T_2$. Let $m=\sup_{\alpha_2\in\T_2} i(\alpha_2 ,\T_3)$. Then any arc $\alpha_2\in\T_2$ is covered by at most $m+2$ complementary triangles of $\T_3$. Then any arc $\alpha_1\in\T_1$ is covered by at most $3(n+2)(m+2)$ complementary triangles to $\T_3$ and therefore it is intersected by at most $9(n+2)(m+2)$ arcs of $\T_3$. Therefore $\sup_{\alpha_1\in\T_1}i(\alpha_1,\T_3)\leq 9(n+2)(m+2)$ and by symmetry we obtain $\sup_{\alpha_3\in\T_3}i(\alpha_3,\T_1)<\infty$. Thus $\T_1$ and $\T_3$ have the finite intersection property.
\end{proof}

\begin{proposition}
\label{prop:M(H)-group}
The set $\mathcal{M}(\mathbb{H})$ is a group under composition.
\end{proposition}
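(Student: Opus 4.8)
The plan is to verify the group axioms for $\mathcal{M}(\mathbb{H})$ one at a time, the decisive input being the elementary observation that a circle homeomorphism preserves geometric intersection numbers of geodesics. Associativity is automatic since composition of maps is associative, and the identity lies in $\mathcal{M}(\mathbb{H})$ because it fixes $\F$ while distinct edges of $\F$ meet only at ideal vertices, so $i(\alpha,\F)=0$ for every $\alpha\in\F$ and the finite intersection property holds trivially. The two substantive axioms are closure under composition and under inverses, and I would reduce both to the transitivity of the finite intersection property (Lemma \ref{lem:transitive-fip}) combined with the intersection-preservation fact.

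First I would isolate the key lemma: for any homeomorphism $g:\hat{\R}\to\hat{\R}$ and any two geodesics $\alpha,\beta$ with endpoints in $\hat{\R}$, one has $i(g(\alpha),g(\beta))=i(\alpha,\beta)$. This holds because two complete geodesics cross (with $i=1$) exactly when their endpoint pairs link on $\partial\H=\hat{\R}$, and every circle homeomorphism preserves the linking (separation) relation of pairs of points, regardless of orientation. Consequently, applying $g$ to a pair of triangulations carries the finite intersection property between them to the finite intersection property between their images.

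For closure, let $h_1,h_2\in\mathcal{M}(\mathbb{H})$ and set $T=h_2(\F)$. Since $h_2\in\mathcal{M}(\mathbb{H})$, the pair $\F,T$ has the finite intersection property; applying the key lemma with $g=h_1$ shows that $h_1(\F)$ and $h_1(T)=(h_1\circ h_2)(\F)$ likewise have the finite intersection property, while $\F$ and $h_1(\F)$ have it because $h_1\in\mathcal{M}(\mathbb{H})$. Transitivity (Lemma \ref{lem:transitive-fip}) then yields the finite intersection property of $\F$ and $(h_1\circ h_2)(\F)$, and as $h_1\circ h_2$ is a homeomorphism preserving $\hat{\Q}$, we conclude $h_1\circ h_2\in\mathcal{M}(\mathbb{H})$. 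For inverses, note that $h^{-1}$ preserves $\hat{\Q}$, and the key lemma gives, for each $\alpha\in\F$,
$$
i(\alpha,h^{-1}(\F))=\sum_{\beta\in\F}i(h(\alpha),\beta)=i(h(\alpha),\F)\le\sup_{\gamma\in h(\F)}i(\gamma,\F)<\infty,
$$
with the symmetric computation bounding $\sup_{\beta\in h^{-1}(\F)}i(\beta,\F)$; hence $\F$ and $h^{-1}(\F)$ have the finite intersection property and $h^{-1}\in\mathcal{M}(\mathbb{H})$.

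The one point requiring care—and the main obstacle—is that Lemma \ref{lem:transitive-fip} and the finite intersection property are phrased for \emph{allowable} (locally finite) triangulations, so one must confirm that $(h_1\circ h_2)(\F)$ and $h^{-1}(\F)$ are genuinely locally finite before the statement is complete. I would deduce this from the uniform intersection bounds just obtained: any compact set is covered by finitely many complementary triangles of $\F$, each bounded by three edges, and each such edge is crossed by at most $\sup_{\alpha\in\F}i(\alpha,\,\cdot\,)<\infty$ arcs of the image triangulation, while the only further arcs meeting the compact set have both endpoints among the vertices of a single triangle of $\F$. Thus just finitely many arcs of the image can meet the compact set, promoting the combinatorial finiteness to the required local finiteness and closing the argument.
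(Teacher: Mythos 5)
Your proof is correct and takes essentially the same route as the paper's: both rest on the invariance of the finite intersection property under circle homeomorphisms (which the paper simply asserts and you justify via preservation of linking of endpoint pairs), with inverses handled by applying $h^{-1}$ to the pair $(\F, h(\F))$ and closure under composition by applying one of the maps and invoking the transitivity Lemma \ref{lem:transitive-fip}. Your extra checks (identity, associativity, and local finiteness of the image triangulations) are details the paper leaves implicit; note only that local finiteness of $h(\F)$ follows directly from a compactness/accumulation argument for any homeomorphism $h$, which avoids the slight circularity of deducing it from intersection bounds that formally presuppose allowability.
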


\begin{proof}
Assume $h\in \mathcal{M}(\mathbb{H})$. Then $h(\F )$ and $\F$ have the finite intersection property. This property remains invariant under homeomorphisms of $\hat{\mathbb{R}}$ that preserve $\hat{\mathbb{Q}}$. Therefore $h^{-1}(h(\F ))=\F$ and $h^{-1}(\F )$ have the finite intersection property and $h^{-1}\in \mathcal{M}(\mathbb{H})$.

Let $h_1,h_2\in \mathcal{M}(\mathbb{H})$. Then $\F$ and $h_1(\F )$, and $\F$ and $h_2(\F )$ have the finite intersection property. The finite intersection property is preserved under homeomorphisms of $\hat{\mathbb{R}}$ that setwise fix $\hat{\mathbb{Q}}$. Then $h_2(\F )$ and $h_2(h_1(\F ))$ have the finite intersection property. By Lemma \ref{lem:transitive-fip}, $\F$ and $h_2\circ h_1(\F )$ have the finite intersection property. Therefore $h_2\circ h_1\in \mathcal{M}(\mathbb{H})$.
\end{proof}

We end with the observation that the group $\mathcal{M}(\H)$ contains as interesting proper subgroups (lifts of) the mapping class (semi-)groups of finite-type punctured surfaces (see \cite{Penner1} and \cite{Penner}) and the baseleaf preserving mapping class group of the punctured solenoid (see \cite{PennerSaric} and \cite{BonnotPennerSaric}). 
\bigskip

\end{document}